\newcommand{\RNum}[1]{\uppercase\expandafter{\romannumeral #1\relax}}
\newcommand{\NN}{\mathbb{N}}
\newcommand{\ZZ}{\mathbb{Z}}
\newcommand{\RR}{\mathbb{R}}
\newcommand{\QQ}{\mathbb{Q}}
\newcommand{\TT}{\mathcal{T}}
\newcommand{\PP}{\mathbb{P}}
\newtheorem{thm}{Theorem}
\theoremstyle{definition}
\newtheorem{theorem}[thm]{Theorem}
\newtheorem{lemma}[thm]{Lemma}
\newtheorem{definition}[thm]{Definition}
\newtheorem{remark}[thm]{Remark}
\newtheorem{corollary}[thm]{Corollary}
\numberwithin{thm}{section}
\title[Counting simultaneous core partitions]
{Johnson's bijections and their application to counting simultaneous core partitions}
\author{Jineon Baek}
\address{Jineon Baek, University of Michigan, Department of Mathematics,  
2074 East Hall,
530 Church Street,
Ann Arbor, MI 48109-1043}
\email{jineon@umich.edu}
\author{Hayan Nam}
\address{Hayan Nam, University of California, Irvine, Department of
Mathematics, 340 Rowland Hall, Irvine, CA 92697}
\email{hayann@uci.edu}
\author{Myungjun Yu}
\address{Myungjun Yu, University of Michigan, Department of Mathematics,  
2074 East Hall,
530 Church Street,
Ann Arbor, MI 48109-1043}
\email{myungjuy@umich.edu}
\begin{document}

\begin{abstract}
Johnson recently proved Armstrong's conjecture which states that the average size of an $(a,b)$-core partition is $(a+b+1)(a-1)(b-1)/24$. He used various coordinate changes and one-to-one correspondences that are useful for counting problems about simultaneous core partitions. We give an expression for the number of $(b_1,b_2,\cdots, b_n)$-core partitions where $\{b_1,b_2,\cdots,b_n\}$ contains at least one pair of relatively prime numbers. We also evaluate the largest size of a self-conjugate $(s,s+1,s+2)$-core partition. 
\end{abstract}

\maketitle
\sloppy

\section{Introduction}
Let $\NN$ denote the set of non-negative integers. If $\lambda = (\lambda_1, \lambda_2, \cdots, \lambda_{\ell})$ is an $\ell$-tuple of non-increasing positive integers with $\sum_{i=1}^\ell \lambda_i = n$, then we call $\lambda$ a \emph{partition} of $n$. One can visualize $\lambda$ by using \emph{Ferrers diagram} as in Figure \ref{picture1}. Each square in a Ferrers diagram is called a \emph{cell}. By counting the number of cells in its NE (North East) and NW (North West) direction including itself, we define the \emph{hook length} of a cell. For example, the hook length of the colored cell in Figure \ref{picture1} is $6$.

\vspace{10mm}

\begin{figure}[h]
\includegraphics[width=5cm]{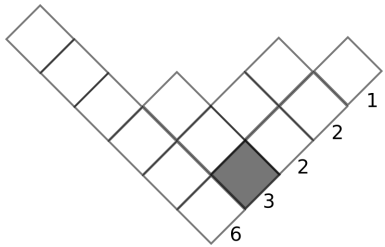}
\caption{$\lambda =(6,3,2,2,1)$}
\label{picture1}
\end{figure}

We say $\lambda$ is an \emph{$a$-core partition} (or, simply an \emph{$a$-core}) if there is no cell whose hook length is divisible by $a$. Similarly, we say a partition is an \emph{$(a_1, a_2, \cdots, a_n)$-core} if it is simultaneously an $a_1$-core, an $a_2$-core, $\cdots$, and an $a_n$-core.

Anderson \cite{And02} proved that if $a$ and $b$ are coprime, the number of $(a,b)$-cores is $\mathrm{Cat}_{a,b} := \frac{1}{a+b} {{a+b}\choose{a}}$, which is a generalized Catalan number.  Since Anderson \cite{And02}, many mathematicians have been conducting research on counting simultaneous core partitions and related subjects: \cite{article1}, \cite{article3}, \cite{Amd16}, \cite{Armstrong},  \cite{Johnson}, \cite{article8}, \cite{article9}, \cite{Str1}, \cite{Xionggenerating}, \cite{Xiong}, \cite{YQJZ}, \cite{article5}.

Armstrong \cite{Armstrong} conjectured that if $a$ and $b$ are coprime, the average size of an $(a,b)$-core partition is $(a+b+1)(a-1)(b-1)/24$. Johnson \cite{Johnson} recently proved Armstrong's conjecture by using Ehrhart theory. A proof without Ehrhart theory was given by Wang \cite{Wang}.

 In \cite{Johnson}, Johnson estabilished a bijection between the set of $(a,b)$-cores and the set 
$$
\left\{(z_0, z_1, \cdots , z_{a-1}) \in \NN^a : \sum_{i=0}^{a-1} z_i = b \text{ and } a \mid \sum_{i=0}^{a-1} iz_i\right\}.
$$
By showing that the cardinality of this set is $\mathrm{Cat}_{a,b}$, he gave a new proof of Anderson's theorem. Inspired by Johnson's method and this bijection, we count the number of simultaneous core partitions. We find a general expression for the number of $(b_1,b_2,…, b_n)$-core partitions where $\{b_1,b_2,…,b_n\}$ contains at least one pair of relatively prime numbers. As a corollary, we obtain an alternative proof for the number of $(s,s+d,s+2d)$-core partitions, which was given by Yang-Zhong-Zhou \cite{YZZ} and Wang \cite{Wang}. Subsequently, we also give a formula for the number of $(s,s+d,s+2d,s+3d)$-core partitions.

Many authors have studied core partitions satisfying additional restrictions.  For example, Berg and Vazirani \cite{Berg} gave a formula for the number of $a$-core partitions with largest part $x$. We generalize this formula, giving a formula for the number of $a$-core partitions with largest part $x$ and second largest part $y$.

This paper also includes a result related to the largest size of a simultaneous core partition which has been studied by many mathematicians. For example, Aukerman, Kane and Sze \cite[Conjecture 8.1]{AKL} conjectured that if $a$ and $b$ are coprime, the largest size of an $(a,b)$-core partition is $(a^2-1)(b^2-1)/24$. This was proved by Tripathi in \cite{Tripathi}. It is natural to wonder what would be the largest size of an $(a,b,c)$-core. Yang-Zhong-Zhou \cite{YZZ} found a formula for the largest size of an $(s,s+1,s+2)$-core. In section 4, we give a formula for the largest size of a self-conjugate $(s,s+1,s+2)$-core partition. We also prove that such a partition is unique (see Theorem \ref{largestself}).

The layout of this paper is as follows. In Section 2, we introduce Johnson's $c$-coordinates and $x$-coordinates for core partitions. In Section 3, we give a formula for the largest size of a self-conjugate $(s, s+1, s+2)$ core partition. In Section 4, using $c$-coordinates, we count the number of $a$-core partitions with given largest part and second largest part. In Section 5, we derive formulas for the number of simultaneous core partitions by using Johnson's $z$-coordinates.

\section{Review of Johnson's bijections}
In this section, we review Johnson's bijections in \cite{Johnson}, which are fundamental in this paper. For an integer $a$ greater than $1$, let $\PP_a$ denote the set of $a$-core partitions. Let 
$$
C_a := \left\{(c_0, c_1, \cdots, c_{a-1}) \in \ZZ^a: \sum_{i=0}^{a-1} c_i = 0\right\}.
$$
We first construct a bijective map from $C_a$ to $\PP_a$. 

\subsection{One-to-one correspondence between $\PP_a$ and $C_a$}
 
For each element $(c_0, c_1, \cdots, c_{a-1}) \in C_a$, we associate a ``\emph{tilted $a$-abacus}'' to it.

First, draw a vertical line $L$. Consider an infinite row of beads spaced $a$ units apart along with $a-1$ similar rows of beads below it, with each row shifted one unit to the right of the row above it (see Figure \ref{picture2}). Each bead will be colored black or white. No white bead is allowed on the right side of a black bead in the same row. In $i\textsuperscript{th}$ row, $0 \leq i \leq a-1$, we denote the number of white beads to the right of $L$ by $r_i$, and the number of black beads to the left of $L$ by $\ell_i$. Let $c_i=r_i-\ell_i$. Then, we have a tilted $a$-abacus for $(c_0, c_1, \cdots, c_{a-1})$.

Now we construct the corresponding $a$-core partition which is given by a path that consists of NE and SE steps. In a tilted $a$-abacus, let each black bead represent a NE step, and each white bead represent a SE step. The condition $\sum c_i = 0$ implies that the number of NE steps to the left of $L$ equals the number of SE steps to the right of $L$. Black beads to the left of the right-most white bead correspond to parts of the partition. Now, ignoring what row the beads are in, each part is obtained by counting the total number of white beads anywhere to the right of the black bead. For example, $(1,2,0,-3) \in C_4$ corresponds to the $4$-core partition $(9,6,3,1,1,1)$ in Figure \ref{picture2}. The map from $C_a$ to $\PP_a$ defined in this way is bijective (see \cite{Johnson} for details). 

\begin{figure}[h]
\includegraphics[width=10cm]{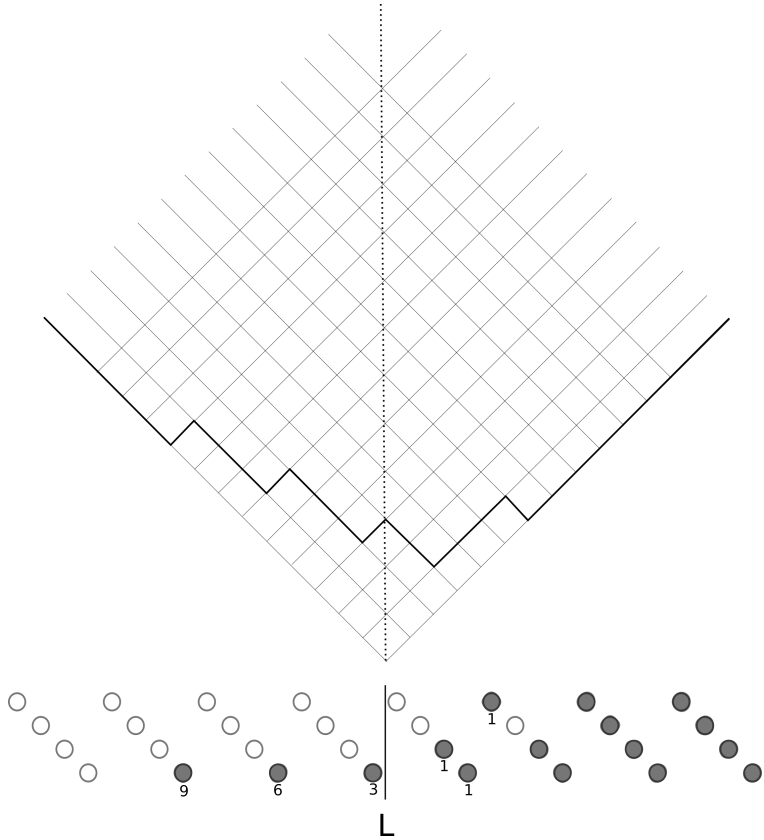}
\caption{$(1,2,0,-3) \in C_4$ and its associated $4$-core partition $(9,6,3,1,1,1)$. The part sizes below the black beads are the number of white beads anywhere to the right of that black bead.}
\label{picture2}
\end{figure}

We write $\varphi_a : \PP_a \to C_a$ for the inverse map. Define 
$$X_a:=\left\{(x_0, x_1, \cdots, x_{a-1}) \in \QQ^a : \sum_{i=0}^{a-1} x_i = 0 \text{ and } x_i \equiv \frac{i}{a} - \frac{a-1}{2a} \text{ (mod $1$)} \right\}. $$
There is a natural bijection from $C_a$ to $X_a$ by letting $x_i =  c_i + \frac{i}{a} - \frac{a-1}{2a}$. The composition of this map with $\varphi_a$ gives the bijection
$$
\psi_a : \PP_a \to X_a.
$$

For the rest of this section, we fix an $a$-core $\lambda$, $\varphi_a(\lambda) = (c_0, c_1, \cdots, c_{a-1})$, and $\psi_a(\lambda) = (x_0, x_1, \cdots, x_{a-1})$. Below are several lemmas which we use throughout the paper.

\begin{lemma}\cite[Theorem 2.10]{Johnson}
\label{csize}
The size of $\lambda$ is
$$
\sum_{k=0}^{a-1} \left(\frac{a}{2} c_k^2 +kc_k\right).
$$
\end{lemma}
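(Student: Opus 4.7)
The plan is to compute $|\lambda|$ directly from the tilted abacus, using the observation that $|\lambda|$ equals the total number of ordered pairs $(B, W)$ with $B$ a black bead, $W$ a white bead, and the horizontal position of $W$ strictly greater than that of $B$. This identification is immediate from the bijection: each part of $\lambda$ is the number of white beads to the right of some particular black bead, and black beads lying to the right of the rightmost white bead contribute zero.

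My first step would be to compare the configuration of $\lambda$ to the fixed ``reference'' --- the configuration for the empty partition --- in which the bead at every position $\geq 0$ is black and the bead at every position $<0$ is white. Let $X$ be the set of positions of black beads at $<0$ and $Y$ the set of positions of white beads at $\geq 0$; the hypothesis $\sum_i c_i = 0$ forces $|X|=|Y|$. Row by row the extras form an arithmetic progression of common difference $a$: when $c_i\geq 1$, $Y$ contributes $i, i+a, \dots, i+(c_i-1)a$; when $c_i\leq -1$, $X$ contributes $i+c_i a, \dots, i-a$; and when $c_i=0$ neither set sees row $i$.

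The core of the argument would split the pair count for $|\lambda|$ by whether each of the two beads is extra or agrees with the reference, yielding four sub-sums. After expanding them and cancelling the $N$-dependent terms (with $N:=|X|=|Y|$), the purely combinatorial contributions --- namely the $\binom{N}{2}$ and $N^2$ pieces --- vanish against each other, and the count collapses to the clean identity
\[
|\lambda| \;=\; \sum_{q \in Y} q \;-\; \sum_{p \in X} p.
\]
This collapse is the main obstacle: the four-case enumeration is straightforward but easy to mis-sign, and the rest of the proof is essentially free once the identity above is established.

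Finally, evaluating the two sums row by row is a short arithmetic-progression computation. Whether $c_i \geq 1$ or $c_i \leq -1$, the row-$i$ contribution turns out to be $i c_i + \tfrac{a}{2} c_i(c_i - 1)$ (and trivially $0$ when $c_i = 0$). Summing over $i$ and using $\sum_i c_i = 0$ to kill the $-\tfrac{a}{2} \sum_i c_i$ remainder leaves exactly $\sum_{k=0}^{a-1} \bigl( \tfrac{a}{2} c_k^2 + k c_k \bigr)$, as required.
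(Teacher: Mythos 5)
Your proposal is correct, and it supplies something the paper itself does not: the paper states this lemma only with a citation to Johnson's Theorem 2.10 and gives no argument, whereas you give a self-contained proof directly from the tilted-abacus description in Section 2. Your two key steps both check out. First, with the natural position convention (row $i$ occupies the positions congruent to $i$ modulo $a$, with $L$ separating negative from non-negative positions), the first black bead of row $i$ sits at position $i + c_i a$, so your sets $X$ and $Y$ of ``extra'' black and white beads relative to the empty-partition reference are exactly as you describe, and $\sum_i c_i = 0$ gives $|X| = |Y| = N$. Second, the four-case pair count does collapse as claimed: the case (reference black, extra white) gives $\sum_{q \in Y} q - \binom{N}{2}$, the case (extra black, extra white) gives $N^2$, the case (extra black, reference white) gives $-\sum_{p \in X} p - N^2 + \binom{N}{2}$, and the remaining case is empty, so $|\lambda| = \sum_{q \in Y} q - \sum_{p \in X} p$; your row-by-row evaluation $ic_i + \tfrac{a}{2}c_i(c_i-1)$ in both signs of $c_i$ and the final use of $\sum_i c_i = 0$ are also correct (one can sanity-check against the paper's example $(1,2,0,-3)$, where $Y=\{0,1,5\}$, $X=\{-9,-5,-1\}$ and both sides equal $21$). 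The only caveat is presentational: the cancellation step is asserted rather than written out, and since you yourself flag it as easy to mis-sign, a final write-up should include the four sub-sums explicitly. Conceptually your route is the standard beta-number computation underlying Johnson's own proof, so it buys the reader a proof where the paper offers only a reference, at the cost of fixing an explicit coordinate/position convention that the paper leaves implicit in its figure.
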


\begin{lemma}
\label{selfconj}
For each $0 \le i \le a-1$, the partition $\lambda$ is self-conjugate if and only if $c_i = - c_{a-1-i}$. 
\end{lemma}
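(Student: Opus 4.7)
The plan is to analyze how conjugation acts on the bi-infinite sequence of colored beads in the tilted $a$-abacus and read off its effect on the $c$-coordinates.

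For concreteness I would first normalize the picture by translating so that $L$ sits at $x = -\tfrac{1}{2}$. Then the beads of row $i$ occupy the horizontal positions $i + ka$ for $k \in \ZZ$, with those at $k \ge 0$ lying to the right of $L$. The row rule that whites lie weakly to the left of blacks within each row gives, in every row $i$, an integer $m_i$ such that the bead at $i + ka$ is white for $k < m_i$ and black for $k \ge m_i$. A short count then yields $r_i = \max(m_i, 0)$ and $\ell_i = \max(-m_i, 0)$, hence $c_i = r_i - \ell_i = m_i$, so $c_i$ simply records the transition point in row $i$.

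Next I would translate conjugation of $\lambda$ into an involution on beads. Conjugation reflects the Young diagram across its main diagonal, which on the boundary lattice path corresponds to reversing the sequence of steps and swapping NE with SE. Reading beads of the abacus from left to right as this boundary path (black $=$ NE, white $=$ SE), conjugation therefore identifies the bead at horizontal position $m$ in the abacus of $\lambda$ with the bead of opposite color at horizontal position $-1-m$ in the abacus of $\lambda'$; the center of this reflection coincides with $L$, since $L$ is the unique bead-separating vertical line enforcing $\sum c_i = 0$ and the reflection preserves that balance. Since $m = i+ka$ lies in row $i$ and $-1-m = (a-1-i) + (-(k+1))a$ lies in row $a-1-i$, the involution matches the $k$-th bead of row $i$ with the $(-(k+1))$-th bead of row $a-1-i$.

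Combining the two steps, $\lambda = \lambda'$ amounts to the statement that for every $i$ and every integer $k$, the bead at $(i,k)$ is white iff the bead at $(a-1-i,\,-(k+1))$ is black. Via the dictionary $c_i = m_i$ this reads ``$k < c_i$ iff $-(k+1) \ge c_{a-1-i}$,'' equivalently ``$k < c_i$ iff $k < -c_{a-1-i}$,'' and comparing these two integer thresholds as $k$ varies forces $c_i = -c_{a-1-i}$ for all $i$; the converse is the same equivalence read backwards. The main obstacle in carrying this out is the geometric bookkeeping in the middle step — verifying that the reflection whose composition with the color swap implements partition conjugation has its center precisely at $L$, so that the same choice of $L$ serves both $\lambda$ and $\lambda'$. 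Once that point is established, the rest is a direct translation through $c_i = m_i$, and a quick sanity check on Figure~\ref{picture2} (where $c_0 + c_3 = -2 \ne 0$ and $(9,6,3,1,1,1)$ is indeed not self-conjugate) is reassuring.
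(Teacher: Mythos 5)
Your argument is correct and is exactly the content the paper leaves implicit: its proof is the one-line assertion that the lemma ``follows from the construction of the bijection between $\PP_a$ and $C_a$,'' and your write-up supplies that verification (conjugation acts on the abacus as reflection about $L$ combined with the color swap, sending row $i$ to row $a-1-i$ and $c_i$ to $-c_{a-1-i}$, with the balance condition $\sum c_i = 0$ pinning the reflection center to $L$). Your normalization and the key identity $c_i = m_i$ check out against the paper's example $(1,2,0,-3)\leftrightarrow(9,6,3,1,1,1)$, so there is nothing to add.
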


\begin{proof}
The lemma follows from the construction of the bijection beween $\PP_a$ and $C_a$. 
\end{proof}

\begin{lemma}\cite[Lemma 3.1]{Johnson}
\label{bcorecondc}
The partition $\lambda$ is also a $b$-core, and therefore an $(a,b)$-core, if and only if for any $0 \le i \le a-1$,
$$c_{i+b} - c_i \le q_a(b+i),$$ 
where $q_a(b+i)$ is the remainder (between $0$ and $a-1$) when $b+i$ is divided by $a$. 
\end{lemma}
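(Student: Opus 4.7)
The plan is to translate the condition ``$\lambda$ is a $b$-core'' into a property of the tilted $a$-abacus. A standard fact is that a partition is a $b$-core if and only if its beta set $B(\lambda) \subseteq \ZZ$ satisfies $B(\lambda) - b \subseteq B(\lambda)$; geometrically, no bead on the abacus can be moved back by $b$ steps to land on an empty position. I would first translate $c_0, \dots, c_{a-1}$ into concrete bounds describing $B(\lambda)$: because $\lambda$ is an $a$-core, in each residue class $i \pmod a$ the beta set consists of all positions up to some threshold $T_i$, and $T_i$ is an explicit affine function of $c_i$ and $i$, with the normalization fixed by the empty partition and the shift $\frac{i}{a} - \frac{a-1}{2a}$ already present in the $x$-coordinates.

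The $b$-core closure condition then breaks into $a$ inequalities, one per residue class. Writing $b = qa + r$ with $0 \le r < a$, shifting a position in residue class $i$ by $-b$ lands it in residue class $(i-r) \bmod a$, and depending on whether $i \ge r$ or $i < r$ this residue equals $i-r$ or $i-r+a$. In the two cases, the resulting inequality on the thresholds simplifies to $c_i - c_{i-r} \le q$ or $c_i - c_{i-r+a} \le q+1$, respectively. Reindexing by $i \mapsto (i+b) \bmod a$ (so that the smaller index in the inequality becomes the $c_i$ term and the larger becomes $c_{i+b}$), the two cases unite into the single condition $c_{i+b} - c_i \le q_a(b+i)$ claimed in the lemma.

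The main obstacle is the affine bookkeeping behind $T_i = T_i(c_i, i)$. One has to check that the constants work out so that the right-hand side comes to exactly $q_a(b+i)$, rather than a naive bound like $\lfloor(b+i)/a\rfloor$ or $b+i$; this is where the half-integer shift in the $x$-coordinates and the constraint $\sum c_i = 0$ enter, and it accounts for the discrepancy between residue-class indices that straddle $a$ and those that do not. Once the threshold formula and the case analysis are set up correctly, the lemma follows by elementary algebra, and the $a$-core hypothesis is used only to ensure that $B(\lambda) \cap (i + a\ZZ)$ really is of the truncated form $\{n \equiv i \pmod a : n \le T_i\}$.
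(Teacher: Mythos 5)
Your overall strategy---encode the $a$-core by one threshold per runner of the abacus and impose the $b$-core closure condition runner by runner---is the right one, and your intermediate case analysis is essentially correct: writing $b=qa+r$, the runners that do not wrap around give a bound $q$ and the wrapping runners give $q+1$. The genuine error is the last step. Those two bounds assemble into $c_{i+b}-c_i\le\lfloor (b+i)/a\rfloor$, i.e.\ the \emph{quotient}, not the remainder $q_a(b+i)$: in your case $i+r<a$ one has $\lfloor(b+i)/a\rfloor=q$, and in the wrapping case $\lfloor(b+i)/a\rfloor=q+1$, so your own two inequalities are literally the quotient bound. Your claim that the constants ``work out to exactly $q_a(b+i)$, rather than a naive bound like $\lfloor(b+i)/a\rfloor$'' is therefore backwards and does not follow from (indeed contradicts) your case analysis. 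Moreover the remainder version is false as a characterization: for $a=2$, $b=3$, the partition $\lambda=(1)$ has $(c_0,c_1)=(1,-1)$ and is a $3$-core, yet $c_{1+3}-c_1=c_0-c_1=2>0=q_2(3+1)$. (The printed statement of the lemma shares this slip; the version actually used in the paper to derive \eqref{cccond}, and the one consistent with the abacus, is $c_{i+b}-c_i\le\lfloor (b+i)/a\rfloor$, indices mod $a$. The paper itself gives no proof, citing Johnson.)

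The secondary gap is the affine bookkeeping you explicitly defer, and it is not innocuous, because it is where the direction of the inequality (whether the condition reads $c_{i+b}-c_i\le\cdots$ or $c_i-c_{i+b}\le\cdots$) gets decided: in this paper's convention the black beads are filled to the \emph{right}, so the runner-$i$ blacks occupy exactly the positions $\ge \beta_i:=ac_i+i$ (this normalization is pinned down by the worked example $(1,2,0,-3)\mapsto(9,6,3,1,1,1)$ and by the size formula of Lemma \ref{csize}), and hook lengths are differences $q-p$ with a black bead at $p$ and a white bead at $q>p$. Carrying the computation out: $\lambda$ is a $b$-core iff for every $i$, the position $\beta_i+b$ is not a white bead of runner $j=(i+b)\bmod a$, i.e.\ $ac_i+i+b\ge ac_j+j$, which is exactly $c_j-c_i\le (b+i-j)/a=\lfloor(b+i)/a\rfloor$. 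So your skeleton is sound once the threshold formula is made explicit, but the right-hand side you were aiming to reproduce must be corrected from the remainder to the quotient.
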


\begin{lemma}\cite[Lemma 3.4]{Johnson}
\label{xlemma}
The partition $\lambda$ is also a $b$-core partition, and therefore an $(a,b)$-core, if and only if for any $0 \le i \le a-1$, 
$$x_{i+b} -x_i \le b/a.$$
\end{lemma}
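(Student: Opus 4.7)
The plan is to deduce Lemma \ref{xlemma} from Lemma \ref{bcorecondc} by translating the inequality through the affine bijection between $C_a$ and $X_a$, namely $x_i = c_i + \frac{i}{a} - \frac{a-1}{2a}$. For each $0 \le i \le a-1$, set $q = q_a(b+i)$ and $k = \lfloor (b+i)/a \rfloor$, so that $b + i = ka + q$ and $q - i = b - ka$. After extending the tuples $(c_j)$ and $(x_j)$ to all $j \in \ZZ$ in the natural way suggested by the tilted abacus (so that the indices $c_{i+b}$ and $x_{i+b}$ appearing in the two lemmas are both well-defined), a direct substitution of the defining formula $x_j = c_j + j/a - (a-1)/(2a)$ produces an identity that expresses $x_{i+b} - x_i - b/a$ in terms of $c_{i+b} - c_i - q_a(b+i)$, with the remaining terms cancelling. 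Consequently the inequality of Lemma \ref{xlemma} matches the inequality of Lemma \ref{bcorecondc} term-by-term in $i$, so the two characterizations of being an $(a,b)$-core are equivalent.

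Conceptually, the shift $\frac{i}{a} - \frac{a-1}{2a}$ in the definition of $X_a$ is tuned precisely to absorb the index-dependent remainder $q_a(b+i)$ appearing in the $c$-coordinate bound and replace it with the uniform, symmetric bound $b/a$ on the $x$-side. The main technical obstacle is pinning down the convention under which the tuples extend beyond $\{0, \ldots, a-1\}$ (a shift of a row of beads in the tilted abacus by $a$ units leaves the partition unchanged, which fixes the extension); once this bookkeeping is set up consistently between the two coordinate systems, the remainder of the proof is an elementary arithmetic manipulation and the lemma follows immediately from Lemma \ref{bcorecondc}.
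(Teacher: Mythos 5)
Your overall strategy---pushing the inequality of Lemma \ref{bcorecondc} through the affine change of coordinates $x_i=c_i+\frac{i}{a}-\frac{a-1}{2a}$---is the right one, and it is essentially how the statement is obtained in \cite{Johnson} (the present paper gives no proof and simply cites it). But the step you defer to ``elementary arithmetic manipulation'' is where all the content lies, and as you describe it the computation cannot come out as claimed. If $(c_j)_{j\in\ZZ}$ is \emph{any} extension and the extended $x_j$ is defined by $x_j=c_j+\frac{j}{a}-\frac{a-1}{2a}$ for all $j$ (which is what ``direct substitution of the defining formula'' at the index $i+b$ presupposes), then the affine parts cancel identically and
\[
x_{i+b}-x_i-\frac{b}{a}=c_{i+b}-c_i ,
\]
so no term $q_a(b+i)$ appears at all and there is no term-by-term match with Lemma \ref{bcorecondc} as stated. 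Likewise, the abacus extension you invoke (shifting a row by $a$ units, one bead spacing) changes $c_j$ by $\pm 1$ per period and again cannot produce the stated bound $q_a(b+i)$.

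What the equivalence actually requires is fixing the index conventions rather than absorbing them: in both lemmas the subscripts must be read modulo $a$. Writing $r=q_a(i+b)$ for the residue, one has $x_{r}-x_i=c_{r}-c_i+\frac{r-i}{a}$, hence
\[
x_{r}-x_i\le \frac{b}{a}\iff c_{r}-c_i\le\frac{b+i-r}{a}=\left\lfloor\frac{b+i}{a}\right\rfloor .
\]
So the bound produced by the coordinate change is the \emph{quotient} of $b+i$ by $a$, not the remainder; the parenthetical ``remainder'' in Lemma \ref{bcorecondc} is a misprint (the quotient is what is used to derive \eqref{cccond}, and the remainder version is false: for $a=2$, $b=3$ the $(2,3)$-core $\lambda=(1)$ has $(c_0,c_1)=(1,-1)$ by Lemma \ref{csize}, yet $c_{q_2(1+3)}-c_1=c_0-c_1=2>0=q_2(4)$). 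Consequently your conceptual summary---that the shift $\frac{i}{a}-\frac{a-1}{2a}$ is tuned to absorb the remainder $q_a(b+i)$---is not correct: the term $\frac{i}{a}$ accounts for the wrap-around quotient, and the constant $-\frac{a-1}{2a}$ cancels in every difference and plays no role here. Once the mod-$a$ convention is fixed and the bound is corrected to $\lfloor(b+i)/a\rfloor$, the one-line computation above does complete the proof; without that, the proposal has a genuine gap exactly at the point it waves off.
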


\section{The largest size of a self-conjugate $(s,s+1,s+2)$-core partition}
The goal of this section is to give a formula for the largest size of a self-conjugate $(s, s+1, s+2)$-core partition. Yang, Zhong, and Zhou \cite{YZZ} evaluated the largest size of an $(s,s+1,s+2)$-core partition depending on the parity of $s$.

\begin{theorem}[Yang-Zhong-Zhou]
\label{yzz}
The largest size of an $(s, s+1, s+2)$-core partition is
\[
\begin{cases}
m {{m+1}\choose{3}} &\text{ if }  s=2m-1,\\
(m+1) {{m+1}\choose {3}} + {{m+2}\choose{3}} &\text{ if } s=2m.
\end{cases}
\]
Moreover, the largest size partition comes from a unique self-conjugate partition when $s$ is even, and a unique pair of conjugate partitions if $s$ is odd.
\end{theorem}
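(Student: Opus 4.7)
My approach would be to use Johnson's $c$-coordinates to reduce the problem to a constrained convex quadratic maximization. By Lemma \ref{csize}, the size of an $s$-core $\lambda$ with $\varphi_s(\lambda)=(c_0,\ldots,c_{s-1})$ is the strictly convex quadratic $\sum_{k=0}^{s-1}\bigl(\tfrac{s}{2}c_k^2+kc_k\bigr)$, and two applications of Lemma \ref{bcorecondc} with $a=s$, $b=s+1$ and $b=s+2$ convert the $(s,s+1,s+2)$-core condition into a finite system of linear inequalities on $(c_0,\ldots,c_{s-1})$, cutting out a bounded integral polytope $P_s\subset C_s$. The theorem then becomes the statement that $\max_{c\in P_s\cap\ZZ^s}|\lambda|$ equals the claimed value.

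Because the objective is strictly convex, its maximum over the convex hull of $P_s$ is attained at a vertex, so the first step is to identify the candidate vertices. A key structural observation is that for each $i$ the $(s+2)$-core constraint on $c_{i+2}-c_i$ is strictly tighter than what one gets by adding the two consecutive $(s+1)$-core constraints on $c_{i+1}-c_i$ and $c_{i+2}-c_{i+1}$, with slack exactly one unit. This forces an alternation condition on which $(s+1)$-constraints may simultaneously be tight at a vertex, and together with the hyperplane $\sum c_i=0$ it cuts the candidate vertex set down to a small, combinatorially explicit family parametrized by the pattern of saturated $(s+2)$-constraints.

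Guided by the conjugation symmetry $c_i\leftrightarrow -c_{s-1-i}$ of Lemma \ref{selfconj}, I would single out the maximizing vertex(es) from this family. For $s=2m$ the expected maximizer is a single vertex invariant under the involution (hence self-conjugate); for $s=2m-1$ the analogous vertex has a nonzero middle entry $c_{m-1}$, fails self-conjugacy, and comes paired with its image under the involution. Substituting each guess into Lemma \ref{csize} and simplifying should yield $(m+1)\binom{m+1}{3}+\binom{m+2}{3}$ and $m\binom{m+1}{3}$ respectively, matching the stated formula.

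The main obstacle is uniqueness: strict convex maximization on a polytope does not by itself preclude several optimal vertices, so one must rule out every alternative. I would do this by a finite case analysis on which subset of the $(s+1)$- and $(s+2)$-constraints is tight at a hypothetical second maximizer, verifying that each alternative either violates integrality $c_i\in\ZZ$, breaks $\sum c_i=0$, or produces a strictly smaller value of $\sum_{k}\bigl(\tfrac{s}{2}c_k^2+kc_k\bigr)$. The parity split in the theorem's ``moreover'' clause falls out of Lemma \ref{selfconj}: for $s$ odd, self-conjugacy forces the middle entry to vanish, which the extremal configuration violates, so the optimum is attained on a conjugate pair; for $s$ even no such obstruction arises and the optimum is a single self-conjugate partition.
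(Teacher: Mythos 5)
This statement is quoted in the paper from Yang--Zhong--Zhou \cite{YZZ} and is not proved there (it is used as input, e.g.\ at the start of the proof of Theorem \ref{largestself}), so your attempt has to stand on its own; as written it is a strategy outline rather than a proof. The extremal $c$-vectors are never exhibited, the evaluation of the objective via Lemma \ref{csize} is left at ``should yield'' the stated binomial expressions, and the uniqueness assertion --- which is half of the theorem --- is deferred to an unspecified ``finite case analysis.'' These are not routine details: identifying the maximizing configurations, computing their sizes, and ruling out ties is precisely the content of the theorem. Compare the paper's treatment of the analogous self-conjugate problem in Theorem \ref{largestself}, where the candidate sequences are written down explicitly ($c_k=m_k$ or $c_k=n_k$), every competitor is eliminated by a term-by-term comparison of the functions $f_k$, and the two resulting sums are computed; that is exactly the material your plan postpones.

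There is also a technical flaw in the reduction itself. Maximizing the strictly convex objective over the convex hull of the feasible region does place the maximum at a vertex of the real polytope $P_s$, but the problem is over lattice points. The polytope cut out by the cyclic difference inequalities of Lemma \ref{bcorecondc} together with the equality $\sum_{i} c_i = 0$ is not obviously integral (the all-ones equality row spoils the total unimodularity one would have from pure difference constraints), so a vertex of $P_s$ need not be integral, and the integer maximum need not occur at a vertex of $P_s$; hence your census of candidate vertices via tight constraints does not a priori capture all candidates for the lattice optimum. You would need either to prove integrality of $P_s$ for these specific constraints, or to pass to the integer hull (whose vertices your tightness analysis does not describe), or --- simplest and in the spirit of the paper --- to argue directly on integer sequences satisfying the difference bounds, as in the proof of Theorem \ref{largestself}. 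Note also that in your $a=s$ coordinates the wrap-around constraints have right-hand side $2$ rather than $1$ (e.g.\ $c_0-c_{s-1}\le 2$), so the ``slack exactly one unit'' alternation observation needs a separate treatment at the boundary indices. The symmetry discussion via Lemma \ref{selfconj} (middle entry forced to vanish for odd $s$) is sound, but it only explains why a self-conjugate maximizer cannot occur for odd $s$ once the maximizer is known; it does not substitute for the missing identification and uniqueness arguments.
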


We devote this section to find the largest size of a \emph{self-conjugate} $(s,s+1,s+2)$-core partition.

\begin{remark}
Our result in Theorem \ref{largestself} coincides with Theorem \ref{yzz} if $s$ is even. In this case, we get a unique largest size partition. On the other hand, when $s$ is odd, the size of the largest self-conjugate $(s,s+1,s+2)$-core is smaller than the size of the largest unrestricted $(s,s+1,s+2)$-core, and the difference between them is 
\[
\begin{cases}
(2w-1)w^2 &\text{ if }  s=4w-1,\\
(2w-1)(w-1)^2 &\text{ if } s=4w-3.
\end{cases}
\]
If $s$ is odd, Yang, Zhong, and Zhou \cite{YZZ} showed there are two $(s,s+1,s+2)$-cores (a pair of conjugate partitions) with the largest size, whereas there is a unique self-conjugate $(s,s+1,s+2)$-core with the largest size (see Theorem \ref{largestself}). 
\end{remark}

Let $\lambda$ be a self-conjugate $(s, s+1, s+2)$-core partition. Let $a= s+1$ and $\varphi_a(\lambda) = (c_0, c_1, \cdots c_{s})$. Then Lemma \ref{bcorecondc} shows

\begin{equation}
\label{cccond}
\begin{split}
-1 &\le c_1- c_0 \le 1, \\
-1 &\le c_2- c_1 \le 1, \\
&\vdots \\
-1 &\le c_{s}- c_{s-1} \le 1,\\ 
0 & \le c_0- c_s \le 2.
\end{split}
\end{equation}

\begin{thm}
\label{largestself}
The largest size of a self-conjugate $(s, s+1, s+2)$-core partition is
\[
\begin{cases}
\frac{w(2w+1)(4w^2+2w+1)}{3}& \text{ if } s= 4w,         \\
\frac{w^2(8w^2-6w+1)}{3} &\text{ if } s= 4w-1,      \\
\frac{w(2w-1)(4w^2-2w+1)}{3} &\text{ if } s= 4w-2,     \\
\frac{(w-1)(2w-1)(4w^2-5w+3)}{3} & \text{ if } s= 4w-3.
\end{cases}
\] Moreover, there is a unique self-conjugate $(s,s+1,s+2)$-core partition having the largest size.
\end{thm}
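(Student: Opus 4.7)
The plan is to apply Johnson's bijection $\varphi_a$ of Section~2 with $a = s+1$ and translate the problem into an extremal problem for the integer sequence $(c_0, \ldots, c_s) = \varphi_a(\lambda) \in C_a$.

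First, I would unpack the combinatorial constraints. By Lemma~\ref{selfconj} the $c$-sequence is anti-palindromic ($c_i = -c_{s-i}$), and (\ref{cccond}) (from Lemma~\ref{bcorecondc} with $b \in \{s, s+2\}$) imposes $c_0 \in \{0, 1\}$ and consecutive steps in $\{-1, 0, 1\}$. These together force the middle coordinate(s) to vanish: when $s = 2m$ is even, $c_m = 0$; when $s = 2m-1$ is odd, the step bound $|c_m - c_{m-1}| \le 1$ combined with $c_{m-1} = -c_m$ forces $c_{m-1} = c_m = 0$. Setting $L := \lfloor s/2 \rfloor$, the self-conjugate $(s,s+1,s+2)$-cores biject with lattice paths $(c_0, \ldots, c_L)$ with $c_0 \in \{0, 1\}$, $c_L = 0$, and consecutive differences in $\{-1, 0, 1\}$; the remaining coordinates are determined by anti-palindromy.

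By Lemma~\ref{csize} and anti-palindromic cancellation, the size reduces to a sum $|\lambda| = \sum_{k=0}^{L} f_k(c_k)$ in which each $f_k$ is a strictly convex quadratic of the shape $A c^2 - B_k c$ with leading coefficient $A > 0$ and with $B_k > 0$ decreasing linearly in $k$. Strict convexity implies that on the admissible polytope the maximum of $\sum f_k$ occurs at an extreme (integer) point. A local exchange argument using two facts---(i) each $f_k$ restricted to an integer interval is maximized at an endpoint, and (ii) negative values of $c_k$ are favored over positive values of the same magnitude because $-B_k c_k$ then contributes positively---shows that only two admissible paths can be global maximizers: the \emph{going-down} path $c_k = -\min(k, L - k)$ with $c_0 = 0$, and the \emph{going-up} path $c_k = \min(k+1, L - k)$ with $c_0 = 1$. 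Any other admissible path is strictly dominated by one of these two.

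The final step is to compute $|\lambda|$ for each of the two candidates via standard identities for $\sum k$ and $\sum k^2$ and then compare them. The main obstacle is the parity dichotomy, which I expect to hinge on the parity of $L$: when $L$ is even (equivalently $s \equiv 0, 1 \pmod{4}$) the two candidate paths attain the same maximum magnitude $L/2$ and going-down wins by the $-B_k c_k$ boost; when $L$ is odd (equivalently $s \equiv 2, 3 \pmod{4}$) the going-up path reaches magnitude $(L+1)/2$ while the going-down path only reaches $(L-1)/2$, and the quadratic term dominates, so going-up wins. Matching the resulting closed forms against the four cases of the theorem statement verifies the size formula, and strict convexity of each $f_k$ together with the sharpness of the exchange argument delivers uniqueness of the maximizer.
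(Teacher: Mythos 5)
Your proposal follows essentially the same route as the paper: Johnson's $c$-coordinates with $a=s+1$, the self-conjugacy (anti-palindromy) and step constraints forcing $c_0\in\{0,1\}$ and vanishing middle coordinates, reduction of the size to a separable convex quadratic $\sum_k f_k(c_k)$ over the half-sequence, a convexity argument isolating the same two extremal paths (the paper's cases (i) and (ii)), and a final explicit comparison, with your parity-of-$L$ dichotomy matching the paper's outcomes (up path wins for $s\equiv 2,3 \pmod 4$, down path for $s\equiv 0,1 \pmod 4$). The only differences are minor: you treat all four residues uniformly where the paper details $s\equiv 1,3\pmod 4$ and defers even $s$ to Theorem \ref{yzz}, and your asserted exchange argument still requires the paper's brief case analysis (notably the case $c_0=1$ with an interior zero, handled by a shifted termwise comparison against the down path) to rule out mixed-sign paths.
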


\begin{proof}
Recall $a=s+1$. We give details for the cases $s = 4w-1$ and $s = 4w-3$.  The other cases can be proved similarly, or follow from Theorem \ref{yzz}. First assume that $s=4w-1$. Note that $c_{2w-1} =c_{2w} =0$ and $c_0 = 0$ or $1$ by Lemma \ref{selfconj} and \eqref{cccond}. Lemma \ref{csize} and Lemma \ref{selfconj} gives the size of a partition in terms of $c_i$-coordinates:
\begin{equation}
\label{size}
\sum_{k=0}^s \left(\frac{s+1}{2} c_k^2 +kc_k\right) = \sum_{k=0}^{2w-1} \left(4wc_k^2 - (4w-1- 2k)c_k\right). 
\end{equation}
For simplicity, let 
$$
f_k(c_k) = 4wc_k^2 - (4w-1-2k)c_k.
$$
For $0\le k \le 2w-1$, we define $m_k := \text{min}(k+1, 2w-1-k)$ and $n_k := -\text{min}(k, 2w-1-k)$. We claim \eqref{size} has its maximum either when
\begin{enumerate}[label=(\roman*)]
\item
$c_k = m_k$ for $0\le k \le 2w-1$, or
\item
$c_k = n_k$ for $0\le k \le 2w-1$.
\end{enumerate}

Note that case (i) is when $c_k$'s are ``as large as possible'' and case (ii) is when $c_k$'s are ``as small as possible'' under the restrictions on $c_i$. In case (i), $\{c_k\}$ is increasing for $0 \le k \le w-1$ and decreasing for $w-1 \le k \le 2w-1$ with the peak $c_{w-1} = w$. In case (ii), $\{c_k\}$ is decreasing for $0 \le k \le w-1$ and increasing for $w \le k \le 2w-1$ with two lowest terms $c_{w-1} = c_w = -(w-1)$.

Let $c_0 = 0$. It is clear that $f_k(c_k)$ has its maximum when $c_k = -\text{min}(k, 2w-1-k)$, so \eqref{size} gets its maximum in case (ii) . If $c_k > 0$ for $0 \le k \le 2w-2$, then $f_k(c_k)$ has its maximum if $c_k = \text{min}(k+1, 2w-1-k)$, which is the case (i). Finally, if $c_0  = 1$ and there is some $1 \le i \le 2w-2$ such that $c_i = 0$, we may assume $i$ is the smallest such index. By a similar reasoning as above we conclude that \eqref{size} acquires its maximum (in the case $c_i =0$) when 
\[c_k = l_k :=
\begin{cases}
\text{min}(k+1, i-k)    & \text{ if } 0 \le k \le i,         \\
-\text{min}(k-i, 2w-1-k)    &\text{ if }   i \le k \le 2w-1.   
\end{cases}
\] 
However, the maximum size in this case is bounded by the value of \eqref{size} in case(ii). To justify it, it is enough to note the following:
\begin{align*}
f_k(l_k) &< f_{k+1}(n_{k+1})   \text{ if } 0\le k \le i-1 \\
f_k(l_k) &= f_0(n_0) = 0 \text{ if } k = i \\
f_k(l_k)& \le f_{k}(n_k) \text{ if } i+1 \le k \le 2w-1
\end{align*}
By simple calculations, one can see
\begin{align*}
\sum_{k=0}^{2w-1} f_k(m_k)  &=  \frac{w^2(8w^2-6w+1)}{3}, \\
\sum_{k=0}^{2w-1} f_k(n_k) &=  \frac{w^2(8w^2-6w-2)}{3}. 
\end{align*}
Therefore, the maximum of \eqref{size} is $ \frac{w^2(8w^2-6w+1)}{3}$.

Similarly, if $s= 4w-3$ then $c_{2w-2} = c_{2w-1} = 0$, and the size is
\begin{equation}
\label{size2}
\sum_{k=0}^s \left(\frac{s+1}{2} c_k^2 +kc_k\right) = \sum_{k=0}^{2w-2} \left((4w-2)c_k^2 - (4w- 3 - 2k)c_k\right). 
\end{equation}
By the exactly same argument as above, the equation \eqref{size2} has its maximum either when
\begin{enumerate}[label=(\roman*)]
\item
$c_k = \text{min}(k+1, 2w-2-k)$ for $0\le k \le 2w-2$, or
\item
$c_k = -\text{min}(k, 2w-2-k)$ for $0\le k \le 2w-2$.
\end{enumerate}
Simple computations show \eqref{size2} has its maximum in the latter case, so the maximum of \eqref{size2} is $\frac{(w-1)(2w-1)(4w^2-5w+3)}{3}$. Note that it follows from the proof that if $\sum_{k=0}^s \left(\frac{s+1}{2} c_k^2 +kc_k\right)$ has its maximum value, then $c_k$ is determined uniquely, so there is a unique self-conjugate $(s,s+1,s+2)$-core of the largest size.  
\end{proof}


\section{Simultaneous Core Partitions with Fixed Largest Part}

 In this section, the convention is that $\binom{a}{b}=0$ if either $a$ or $b$ is negative.
We begin with a standard combinatorial fact.
\begin{lemma}
\label{numintsol}
 Let $b \in \ZZ$ and $s_i \in \ZZ$ for $0 \le i \le a-1$. Then
 $$
  \left|\left\{(z_0, z_1, \cdots, z_{a-1}) \in \ZZ^a : z_i \ge s_i \text{ and } \sum_{i=0}^{a-1} z_i = b\right\}\right| = {{b+a - \sum_{i=0}^{a-1} s_i -1}\choose{a-1}}.
 $$
\end{lemma}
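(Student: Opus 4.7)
The plan is to reduce to the classical stars-and-bars count by a shift of variables. Set $w_i := z_i - s_i$ for each $0 \le i \le a-1$. Then the condition $z_i \ge s_i$ becomes $w_i \ge 0$, and the equation $\sum_{i=0}^{a-1} z_i = b$ becomes
$$
\sum_{i=0}^{a-1} w_i = b - \sum_{i=0}^{a-1} s_i =: N.
$$
So the map $(z_0,\dots,z_{a-1}) \mapsto (w_0,\dots,w_{a-1})$ is a bijection from the set in the lemma to the set of $(w_0,\dots,w_{a-1}) \in \NN^a$ with $\sum w_i = N$.

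It then suffices to quote (or briefly reprove) the standard fact that the number of non-negative integer solutions of $w_0 + \cdots + w_{a-1} = N$ equals $\binom{N+a-1}{a-1}$, for example via the usual bijection with binary strings of length $N + a - 1$ containing exactly $a-1$ separators. Substituting $N = b - \sum s_i$ gives the claimed formula
$$
\binom{b + a - \sum_{i=0}^{a-1} s_i - 1}{a-1}.
$$

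The only subtle point is the boundary behavior under the convention $\binom{a}{b}=0$ for negative arguments announced at the start of the section. If $N < 0$, i.e., $b < \sum s_i$, then no solutions exist, and indeed the top entry $b + a - \sum s_i - 1$ is at most $a - 2 < a-1$, so the binomial coefficient vanishes (either as a degenerate binomial or, once $b+a-\sum s_i -1 < 0$, by the convention). Thus both sides agree in the empty case, and no genuine obstacle arises; this is really just a bookkeeping check rather than a difficulty.
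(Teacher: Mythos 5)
Your proof is correct: the shift $w_i = z_i - s_i$ followed by stars-and-bars is exactly the standard argument behind this lemma, which the paper states without proof as a standard combinatorial fact, and your remark on the degenerate case $b < \sum s_i$ matches the paper's convention that binomial coefficients with negative entries vanish.
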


\begin{lemma}
\label{partdiff}
If a partition $\lambda=(\lambda_1, \lambda_2, \cdots, \lambda_n)$ is an $a$-core, then 
\begin{enumerate}
\item
$\lambda_k - \lambda_{k+1} \le a-1$ for all $1 \le k \le n$.  
\item
For any $s \in \NN$, $s$ appears at most $a-1$ times among $\lambda_k$ for $1 \le k \le n$. 
\end{enumerate}
\end{lemma}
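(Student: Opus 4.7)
The plan is to prove both parts via direct hook length computations in the Ferrers diagram, using the standard convention $\lambda_{n+1} = 0$ when $k = n$ in part (1).

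For part (1), I would fix an index $k$ and examine the cells in row $k$ that lie strictly beyond row $k+1$, namely the cells at positions $(k, j)$ for $\lambda_{k+1} < j \le \lambda_k$. For each such $j$, column $j$ contains a cell in row $k$ but no cell in row $k+1$, so the leg length of the hook at $(k, j)$ is $0$. The arm length is $\lambda_k - j$, so the hook length at $(k, j)$ equals $\lambda_k - j + 1$. As $j$ ranges over $\lambda_{k+1}+1, \lambda_{k+1}+2, \ldots, \lambda_k$, the hook lengths take all the values $1, 2, \ldots, \lambda_k - \lambda_{k+1}$. If $\lambda_k - \lambda_{k+1} \ge a$, then one of these consecutive integers is divisible by $a$, contradicting that $\lambda$ is an $a$-core.

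For part (2), the slickest approach is to invoke that the conjugate partition $\lambda'$ is also an $a$-core; this is standard, since the hook at $(i,j)$ in $\lambda$ has the same length as the hook at $(j,i)$ in $\lambda'$. Applying part (1) to $\lambda'$ gives $\lambda'_s - \lambda'_{s+1} \le a - 1$ for every $s \in \NN$, and $\lambda'_s - \lambda'_{s+1}$ is by definition the number of parts of $\lambda$ equal to $s$. Alternatively, one can argue directly, parallel to the proof of (1): if $s$ appears $m$ times, say $\lambda_{k_1} = \lambda_{k_1+1} = \cdots = \lambda_{k_1+m-1} = s$ with $\lambda_{k_1+m} < s$ (or $k_1 + m - 1 = n$), then $\lambda'_s = k_1 + m - 1$ and the cells at positions $(k_1+i-1, s)$ for $i = 1, \ldots, m$ have arm length $0$ and leg length $m - i$, hence hook lengths $m, m-1, \ldots, 1$; so $m \ge a$ would again produce a hook length divisible by $a$.

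There is no real obstacle — both parts reduce to the observation that a block of $a$ consecutive positive integers always contains a multiple of $a$. The only point requiring a brief note is the conjugation step (if used for part (2)), which is immediate from the reflective symmetry of hook lengths.
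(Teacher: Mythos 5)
Your proof is correct and follows essentially the same route as the paper: you make explicit the paper's observation that a gap $\lambda_k-\lambda_{k+1}\ge a$ forces a hook of length $a$ in row $k$, and you deduce part (2) by passing to the conjugate, exactly as the paper does. The extra detail (the hook lengths $1,2,\ldots,\lambda_k-\lambda_{k+1}$ in the overhanging cells, and the hook-length symmetry justifying that $\lambda'$ is an $a$-core) is a fleshed-out version of the same argument.
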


\begin{proof}
If $\lambda_i-\lambda_{i+1} \ge a$ for some $i$, it is easy to see that there is a cell with hook length $a$ on the $i\textsuperscript{th}$ row of the Ferrers diagram of $\lambda$, which proves (1). Now (2) follows from (1) by considering the conjugate of $\lambda$.   
\end{proof}

Berg and Vazirani proved the following.

\begin{theorem}[Berg, Vazirani]
The number of $\ell$-core partitions with largest part $k$ is $\binom{\ell+k-2}{k}$.
\end{theorem}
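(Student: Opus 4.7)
My approach is to count in Johnson's $c$-coordinates and reduce to Lemma \ref{numintsol}. The key first step is to establish the clean formula
\[
\lambda_1 \;=\; -\min_{0 \le i \le \ell-1}\bigl(i + \ell\, c_i\bigr), \qquad (c_0, \ldots, c_{\ell-1}) = \varphi_\ell(\lambda),
\]
for the largest part of an $\ell$-core $\lambda$. From the tilted $\ell$-abacus description, the black/white transition in row $i$ sits at position $i + \ell c_i$, so the leftmost black bead across all rows lies at position $p^* = i_0 + \ell c_{i_0}$, where $i_0$ achieves the minimum. By construction $\lambda_1$ equals the number of white beads lying strictly to the right of $p^*$; counting these row by row yields $\sum_{i > i_0}(c_i - c_{i_0}) + \sum_{i < i_0}(c_i - c_{i_0} - 1)$, which collapses via $\sum_i c_i = 0$ to $-i_0 - \ell c_{i_0} = -p^*$. (Two sanity checks: $(1,0,-1) \in C_3$ gives $\lambda = (1)$, and $(1,2,0,-3) \in C_4$ gives $\lambda = (9,6,3,1,1,1)$, both consistent with the formula.)

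With this formula in hand, the condition $\lambda_1 \le k$ becomes $c_i \ge -\lfloor (k+i)/\ell \rfloor$ for each $0 \le i \le \ell - 1$. Setting $d_i := c_i + \lfloor (k+i)/\ell \rfloor$ turns these into $d_i \ge 0$, while membership in $C_\ell$ becomes $\sum_{i=0}^{\ell-1} d_i = \sum_{i=0}^{\ell-1}\lfloor (k+i)/\ell \rfloor$. Writing $k = q\ell + r$ with $0 \le r < \ell$ and splitting the $i$ according to whether $r + i < \ell$ shows this last sum is exactly $k$. Lemma \ref{numintsol} then gives
\[
\#\{\ell\text{-cores } \lambda : \lambda_1 \le k\} \;=\; \binom{k + \ell - 1}{\ell - 1}.
\]
Subtracting the $k$ and $k-1$ counts and applying Pascal's identity yields
\[
\binom{k+\ell-1}{\ell-1} - \binom{k+\ell-2}{\ell-1} \;=\; \binom{k+\ell-2}{\ell-2} \;=\; \binom{\ell+k-2}{k},
\]
as claimed.

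The main obstacle I expect is the derivation of the formula $\lambda_1 = -\min_i(i + \ell c_i)$: it requires correctly identifying which bead positions are white versus black in each row and then collapsing a double sum using $\sum_i c_i = 0$. The remaining steps are a short division-with-remainder identity and a single invocation of Lemma \ref{numintsol}, both routine.
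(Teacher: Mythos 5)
Your proof is correct, and it reaches the Berg--Vazirani count by a genuinely different decomposition than the paper, though within the same framework of Johnson's $c$-coordinates and Lemma \ref{numintsol}. The paper deduces the statement from a finer count (part (1) of Theorem \ref{cmainthm}): it fixes the multiplicity $i$ of the largest part, locates the minimizing index $k_1$ with $c_{k_1}=-(x+k_1)/a$, writes down the resulting system of equalities and lower bounds on the $c_j$ (with a case split according to whether $i+k_1\le a-1$ or $i+k_1\ge a$), counts each case by Lemma \ref{numintsol}, and then sums over $i$. You instead count cumulatively: your formula $\lambda_1=-\min_i(i+\ell c_i)$ is exactly the paper's identity $x=-ac_{k_1}-k_1$ (your row-by-row bead count is the same telescoping sum the paper asserts ``by the construction of $\varphi_a$''), but you use it to convert $\lambda_1\le k$ into the uniform lower bounds $c_i\ge-\lfloor (k+i)/\ell\rfloor$, apply Lemma \ref{numintsol} once to get $\binom{k+\ell-1}{\ell-1}$, and finish by subtracting the $k-1$ count and Pascal's identity. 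Your route is cleaner for this particular statement --- no case analysis and a single application of the counting lemma --- while the paper's refinement by multiplicity is what powers its further results (the exact-multiplicity count in part (1) and the second-largest-part count in part (3)), which a cumulative-then-telescope argument does not directly yield. One small point to make fully rigorous: you should note (as is immediate from integrality of the $c_i$ and the shift by $1$ for rows above the minimizing row) that the minimum of $i+\ell c_i$ is attained where $c_i$ is minimal, so the bead in that row is indeed the leftmost black bead; your two worked examples support this but a one-line argument closes it.
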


In the rest of the section we give a new proof and generalize the theorem.

\begin{thm} 
Let $i \ge 1$ be an integer. Let $\PP_a^{x,i}$ denote the set of $a$-core partitions $\lambda$ such that the largest part of $\lambda$ is $x$ and there are exactly $i$ parts of $\lambda$ equal to $x$. Then, we have the following.
\label{cmainthm}
\begin{enumerate}
\item
$|\PP_a^{x,i}| = {{x+a-2-i}\choose{a-1-i}}.$
\item
The number of $a$-core partitions with largest part $x$ is ${x+a-2} \choose {x}$. 
\item
The number of $a$-core partitions with largest part $x$ and second largest part $y$ is 
\[
\begin{cases}
{y+a-3} \choose {y} &\text{ if } x-y < a-1, \\ 
{{y+a-2}\choose{y}}  &\text{ if } x-y = a-1.
\end{cases}
\]
\end{enumerate}
\end{thm}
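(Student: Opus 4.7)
The plan is to translate the constraints on the top few parts of $\lambda$ into arithmetic conditions on Johnson's $c$-coordinates $\varphi_a(\lambda) = (c_0, \ldots, c_{a-1}) \in C_a$ and then count lattice points via Lemma~\ref{numintsol}. The critical preliminary is a formula for $\lambda_1$ in terms of the $c_i$: from the tilted abacus, the leftmost black bead sits at position $\min_i(c_i a + i)$, and since each part of $\lambda$ equals the number of white beads strictly to the right of some black bead, one obtains $\lambda_1 = -\min_i (c_i a + i)$. Writing $-x = aK + i^*$ with $0 \le i^* \le a-1$, the condition $\lambda_1 \le x$ becomes the staircase bound $c_r \ge K+1$ for $r < i^*$ and $c_r \ge K$ for $r \ge i^*$, while $\lambda_1 = x$ forces the equality $c_{i^*} = K$.

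For part~(1), I would extend this by observing that $\lambda_1 = \cdots = \lambda_i = x$ is equivalent to the $i$ consecutive integer positions $-x, -x+1, \ldots, -x+i-1$ all being black in the abacus (consecutive black beads produce equal parts). Tracking these positions through rows $i^*, i^*+1, \ldots, (i^*+i-1) \bmod a$ pins down $i$ of the coordinates $c_r$: value $K$ for the rows in $[i^*, a-1]$ that are used, and value $K+1$ for the wrapped-around rows $0, 1, \ldots$. The ``exactly $i$ copies'' clause forces the next position $-x + i$ to be white, giving an extra strict inequality on $c_{(i^* + i) \bmod a}$. After the shifts $d_r = c_r - K$ or $d_r = c_r - (K+1)$ on the $a - i$ free indices, the constraint $\sum_r c_r = 0$ collapses to $\sum d_r = x$ with one $d_r \ge 1$ and the rest $\ge 0$; substituting $d' = d - 1 \ge 0$ and invoking Lemma~\ref{numintsol} then produces $\binom{x+a-2-i}{a-1-i}$.

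Part~(2) follows in one step: either by summing part~(1) over $i = 1, \ldots, a-1$ via the hockey-stick identity, or by rerunning the argument with the multiplicity constraint dropped (in which case no coordinate is strictly bounded and the count is $\binom{x + a - 2}{a - 2} = \binom{x+a-2}{x}$). For part~(3) I would further prescribe the second smallest black-bead position $p_2 = 1 - y$, read off from the relation $\lambda_2 = \lambda_1 - (p_2 - p_1 - 1)$, and require every integer position strictly between $p_1 = -x$ and $p_2$ to be white. The case split reflects $\delta := x - y$: when $\delta = a-1$ the bead $p_2$ sits in the same row as $p_1$, pinning no new coordinate but pushing every other $c_r$ up by one; when $\delta < a-1$ the bead $p_2$ sits in a new row $t = (i^* + 1 + \delta) \bmod a$, pinning $c_t$ as well. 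In both cases the same shift-and-count reduces to $\sum d_r = y$ over $a-1$ (resp.\ $a-2$) non-negative free variables, giving $\binom{y+a-2}{y}$ (resp.\ $\binom{y+a-3}{y}$).

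The main obstacle I expect is the bookkeeping for part~(3): the residue of $p_2$ and the wrap-around behavior of the staircase at $i^*$ combine differently depending on whether $i^* + 1 + \delta$ overshoots $a$, so the case $\delta < a-1$ splits further into two sub-cases according to how this overshoot propagates the stricter lower bound $c_r \ge K+2$ onto certain indices. Once the index gymnastics are organized uniformly, the algebra in every branch collapses so that $\sum d_r = y$ with the advertised number of free variables, which serves as the unifying consistency check driving the whole computation.
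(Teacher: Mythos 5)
Your proposal is correct and follows essentially the same route as the paper: it encodes the largest-part (and second-largest-part) conditions as staircase inequalities and pinned values on Johnson's $c$-coordinates -- your linearized bead position $-\min_r(ac_r+r)$ and the decomposition $-x=aK+i^*$ are exactly the paper's $k_1$ with $c_{k_1}=-(x+k_1)/a$ -- and then counts the resulting constrained solutions of $\sum_r c_r=0$ with Lemma~\ref{numintsol}, including the same wrap-around case splits for parts (1) and (3). The only cosmetic difference is your uniform position bookkeeping and the optional hockey-stick summation for part (2), which the paper obtains by the same reduction.
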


\begin{proof}
Recall the construction of $\varphi_a$ in subsection 2.1. Let $\lambda$ be a partition with $i$ many parts of $x$ where $x$ is the largest part. Let $\varphi_a(\lambda) = (c_0, c_1, \cdots, c_{a-1})$ such that $\sum_{k=0}^{a-1} c_k = 0$. Let $k_1: = \text{min}(t: c_t  \le c_s \text{ for } t \neq s )$. Then by the construction of $\varphi_a$, we have
$$
x = (c_0-c_{k_1}-1) + (c_1-c_{k_1}-1) + \cdots (c_{k_1-1} - c_{k_1} -1) + (c_{k_1+1}-c_{k_1}) + \cdots +(c_{a-1} -c_{k_1}).
$$
This implies $-ac_{k_1} - k_1 = x$, which determines $k_1$ since $-k_1 \equiv x$ (mod $a$), and $$c_{k_1}= -(x+k_1)/a.$$ First we assume $i +k_1 \le a-1$. Then we have
\[
\begin{cases}
c_j  \ge -\frac{x+k_1}{a} + 1 &\text{ if } 0 \le j \le k_1-1, \\ 
c_j = -\frac{x+k_1}{a} &\text{ if } k_1 \le j \le k_1+i-1, \\
c_j \ge -\frac{x+k_1}{a} + 1 &\text{ if } j = k_1 + i, \\
c_j  \ge -\frac{x+k_1}{a} &\text{ if } k_1 + i + 1 \le j \le a-1.
\end{cases}
\]
Then, by Lemma \ref{numintsol}, we have that $\sum_{k=0}^{a-1} c_k = 0$ has ${{x+a-2-i}\choose{a-1-i}}$ values of $(c_0, \cdots, c_{a-1})$ satisfying these conditions. Now we assume $i +k_1 \ge a$. Then we have
\[
\begin{cases}
c_j  = -\frac{x+k_1}{a} + 1 &\text{ if } 0 \le j \le i-(a-k_1)-1, \\ 
c_j \ge -\frac{x+k_1}{a} + 2 &\text{ if } j = i-(a-k_1), \\
c_j \ge -\frac{x+k_1}{a} + 1 &\text{ if } i-(a-k_1)+1 \le j \le k_1-1, \\
c_j  = -\frac{x+k_1}{a} &\text{ if } k_1 \le j \le a-1.
\end{cases}
\]
Again, it follows from Lemma \ref{numintsol} that $\sum_{k=0}^{a-1} c_k = 0$ has ${{x+a-2-i}\choose{a-1-i}}$ values of $(c_0, \cdots, c_{a-1})$ satisfying these conditions, and this complete the proof of (1). The second assertion follows from (1). 

Now suppose $\lambda$ has $y$ as the second largest part. Note that $x-y \le a-1$ by Lemma \ref{partdiff}. If $x-y = a-1$, then the second black bead from left locates on the same runner ($k_1\textsuperscript{th}$ runner) as the first black bead from left. This shows
\[
\begin{cases}
c_j \ge -\frac{x+k_1}{a} + 2 &\text{ if } 0 \le j \le k_1-1, \\ 
c_j  \ge -\frac{x+k_1}{a} + 1 &\text{ if } k_1 + 1 \le j \le a-1,
\end{cases}
\]
so there are ${{y+a-2}\choose{y}}$ values of $(c_0, \cdots, c_{a-1})$ satisfying these conditions due to Lemma \ref{numintsol}. If $x-y <a-1$, we first assume $k_2 := k_1 + x-y +1 \le a-1$. Here, $c_{k_2} = c_{k_1} = -\frac{x+k_1}{a} $ and
\[
\begin{cases}
c_j \ge -\frac{x+k_1}{a}+1 &\text{ if } 0 \le j \le k_1-1, \\
c_{j} \ge -\frac{x+k_1}{a}+1 &\text{ if } k_1 + 1 \le j \le k_2-1,\\
c_{j} \ge -\frac{x+k_1}{a} &\text{ if } k_2 + 1 \le j \le a-1.
\end{cases}
\]  
Then $\sum_{k=0}^{a-1} c_k = 0$ has ${{y+a-3}\choose{y}}$ values of $(c_0, \cdots, c_{a-1})$ satisfying these conditions by Lemma \ref{numintsol}. Now we assume $k_2 = k_1 + x-y +1 \ge a$. Then we have
$c_{k_2-a} = -\frac{x+k_1}{a} + 1$ and
\[
\begin{cases}
c_j \ge -\frac{x+k_1}{a}+2 &\text{ if } 0 \le j \le k_2-a-1, \\
c_{j} \ge -\frac{x+k_1}{a}+1 &\text{ if } k_2-a + 1 \le j \le k_1-1,\\
c_{j} \ge -\frac{x+k_1}{a} +1 &\text{ if } k_1 + 1 \le j \le a-1.
\end{cases}
\]  
Then $\sum_{k=0}^{a-1} c_k = 0$ has ${{y+a-3}\choose{y}}$ values of $(c_0, \cdots, c_{a-1})$ satisfying these conditions by Lemma \ref{numintsol} in this case too, so (3) follows.
\end{proof}


\section{Counting simultaneous core partitions}
We continue to assume that $a \ge 2$ is an integer. We also keep the convention that $\binom{a}{b}=0$ if any of $a$ or $b$ is negative.
\begin{definition}
Let $\TT$ denote the operator on $\NN^a$ such that  
$$
\TT(z_0, z_1, \cdots, z_{a-1}) = (z_1, \cdots, z_{a-1}, z_0),
$$
for any $(z_0, z_1, \cdots, z_{a-1}) \in \NN^a$. Let $S$ be a set of tuples $(z_0, z_1, \cdots z_{a-1}) \in \NN^a$. We say $S$ is \emph{stable} under $\TT$ if $\TT(S) = S$.
\end{definition}

\begin{lemma}
\label{mainlem}
Suppose $(a,b) = 1$. Suppose $S$ is stable under $\TT$.  Let
\begin{align*}
Y_1 &= \left\{(z_0, z_1, \cdots, z_{a-1}) \in \NN^a : \sum_{m=0}^{a-1} z_m = b, \text{ and } a \mid \sum_{m=0}^{a-1} mz_m \right\} \cap S,\\
Y_2 &= \left\{(z_0, z_1, \cdots, z_{a-1}) \in \NN^a : \sum_{m=0}^{a-1} z_m = b\right\} \cap S.
\end{align*}
Then $|Y_1| = |Y_2|/a$.
\end{lemma}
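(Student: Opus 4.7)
The plan is to exploit the cyclic action of $\TT$ on $Y_2$ together with the coprimality $(a,b)=1$, and show that on every $\TT$-orbit in $Y_2$ exactly one element lies in $Y_1$.

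First I would check that $\TT$ actually restricts to an action on $Y_2$. The condition $\sum_m z_m = b$ is obviously preserved, and $S$ is stable by hypothesis, so $\TT(Y_2) = Y_2$. Next I would compute how $\TT$ affects the residue $r(z) := \sum_{m=0}^{a-1} m z_m \pmod a$. Writing $\TT(z)_m = z_{m+1 \bmod a}$ and reindexing $j = m+1 \bmod a$, one obtains
\[
r(\TT(z)) \equiv \sum_{j=0}^{a-1}(j-1) z_j \equiv r(z) - \sum_{j=0}^{a-1} z_j = r(z) - b \pmod a.
\]
So each application of $\TT$ shifts the residue by $-b \pmod a$.

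The crucial step is then the orbit analysis. Let $z \in Y_2$ have $\TT$-orbit of size $d$; then $d \mid a$. Applying the residue-shift computation $d$ times yields $r(\TT^d(z)) \equiv r(z) - db \pmod a$, and since $\TT^d(z) = z$ we get $db \equiv 0 \pmod a$. Because $\gcd(a,b) = 1$, this forces $a \mid d$, hence $d = a$. Therefore every $\TT$-orbit in $Y_2$ has size exactly $a$.

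Finally, on a single orbit $\{z, \TT(z), \ldots, \TT^{a-1}(z)\}$ the residues $r(\TT^k(z)) \equiv r(z) - kb \pmod a$ with $k = 0, 1, \ldots, a-1$ run through every residue class mod $a$ exactly once (again using $\gcd(a,b) = 1$). In particular, exactly one element of each orbit has residue $0$, i.e.\ lies in $Y_1$. Summing over orbits gives $|Y_1| = |Y_2|/a$. I expect the main (really only) subtle point to be the identity $r(\TT(z)) \equiv r(z) - b \pmod a$; once that is in hand the rest is formal group-action bookkeeping.
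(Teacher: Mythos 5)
Your proof is correct and follows essentially the same route as the paper: the cyclic $\ZZ/a\ZZ$-action of $\TT$ on $Y_2$, the observation that $\TT$ shifts $\sum_m m z_m$ by $-b$ modulo $a$, and coprimality forcing each orbit to have $a$ elements whose residues hit every class exactly once, so exactly one element per orbit lies in $Y_1$. The paper states these facts more tersely, but the underlying argument is identical.
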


\begin{proof}
There is a natural action of $\ZZ/a\ZZ$ on $Y_2$ such that $\overline{1} \in \ZZ/a\ZZ$ acts as $\TT$. Let $f$ be the function on $Y_2$ such that $f((z_0, z_1, \cdots z_{a-1})) = \sum_{m=0}^{a-1} mz_m$. Let $s\in Y_2$. Note that
\begin{enumerate}
\item
$f(\TT^i(s))$ and $f(\TT^j(s))$ are in different residue classes modulo $a$ for $0 \le i \neq j \le a-1$, and
\item
Each orbit has $a$ elements.
\end{enumerate}
Noting that $a$ and $b$ are coprime proves (1), and (2) follows from (1) consequently. Now the lemma follows immediately. 
\end{proof}

\begin{remark}
As Aaron Berger pointed out, a more general version of Lemma \ref{mainlem} holds: Let  $\Theta : Y_2 \to \RR$ be a \emph{$\TT$-invariant} function, i.e, 
$\Theta \circ \TT = \Theta$. Then we have
$$
\sum_{x \in Y_1} \Theta(x) = \frac{1}{a} \sum_{y \in Y_2}\Theta(y).
$$
The proof is identical to that of Lemma \ref{mainlem}, and one can recover Lemma \ref{mainlem} simply by taking $\Theta = 1$. We will not need this general fact in the sequel.
\end{remark}

The following theorem gives an expression for the number of $(a,b_0, b_1, \cdots, b_n)$-core partitions for any non-negative integer $n$ when $a$ and $b_0$ are coprime. Note that when $n=0$, the following theorem is the same as Lemma 3.5 in \cite{Johnson}. In fact, Johnson used this to give a new proof of Anderson's theorem on the number of $(a,b)$-core partitions.

\begin{thm}
\label{mainthm}
Let $a, b_0, b_1, \cdots, b_n$ be positive integers, where none of $b_i$ is a multiple of $a$. Suppose that $a$ and $b_0$ are coprime. For $1 \le i \le n$, let $l_i$ be such that $1 \le l_i \le a-1$ and $a \mid b_0l_i+b_i$.  Then there is a bijection between the set of $(a, b_0,b_1, \cdots, b_n)$-core partitions and 
$$
\left\{(z_0, \cdots, z_{a-1}) \in \NN^a  :  \sum_{m=0}^{a-1}  z_m = b_0, a \mid \sum_{m=0}^{a-1} mz_m, \text{ and }
\sum_{m=j}^{j+l_i-1} z_m \le (b_0l_i + b_i)/a \right\}, 
$$
where the inequality holds for all $1 \le i \le n$ and $0 \le j \le a-1$. Here, indices are interpreted modulo $a$.
In particular, the number of $(a, b_0,b_1, \cdots, b_n)$-core partitions is
$$
\frac{1}{a}\left|\left\{(z_0, \cdots, z_{a-1}) \in \NN^a  :  \sum_{m=0}^{a-1} z_m = b_0, \text{ and } \sum_{m=j}^{j+l_i-1} z_m \le (b_0l_i + b_i)/a \text{ for all $i,j$ }  \right\}\right|.
$$
\end{thm}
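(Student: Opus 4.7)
The plan is to build on the case $n = 0$, which is exactly Lemma~3.5 of \cite{Johnson}: Johnson's $z$-coordinates provide a bijection between $(a, b_0)$-cores and
\[
Y := \left\{(z_0, \ldots, z_{a-1}) \in \NN^a : \sum_{m=0}^{a-1} z_m = b_0, \; a \mid \sum_{m=0}^{a-1} m z_m\right\}.
\]
Granting this, it suffices to show that under Johnson's bijection, the extra requirements that $\lambda$ also be a $b_i$-core for each $1 \le i \le n$ are encoded precisely by the cyclic window inequalities $\sum_{m=j}^{j+l_i - 1} z_m \le (b_0 l_i + b_i)/a$ for $0 \le j \le a-1$. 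The cardinality formula then follows from Lemma~\ref{mainlem}.

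For the translation, I would apply Lemma~\ref{xlemma} with $b = b_i$ to get the $b_i$-core condition on the $x$-coordinates: $x_{j+b_i} - x_j \le b_i/a$ for all $j$ (indices mod $a$). Johnson's $z$-coordinates arise, after an appropriate cyclic relabeling enforcing $a \mid \sum m z_m$, from the non-negative integers $b_0/a - (x_{m+b_0} - x_m)$, with the key feature that a block of $l_i$ consecutive $z$-indices corresponds to an arithmetic progression of common difference $b_0$ and length $l_i$ among the $x$-indices. The defining relation $b_0 l_i \equiv -b_i \pmod a$ then ensures that telescoping the step-$b_0$ differences of the $x_m$'s along such a progression expresses $x_{j+b_i} - x_j$ as $b_i/a$ minus a sum of $l_i$ consecutive $z$'s, plus an integer shift of $(b_0 l_i + b_i)/a$ contributed by the wrap-around. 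Rearranging this identity converts $x_{j+b_i} - x_j \le b_i/a$ into the stated inequality $\sum_{m=j}^{j+l_i - 1} z_m \le (b_0 l_i + b_i)/a$.

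For the cardinality formula, let
\[
S = \left\{(z_0, \ldots, z_{a-1}) \in \NN^a : \sum_{m=j}^{j+l_i-1} z_m \le \frac{b_0 l_i + b_i}{a} \text{ for all } 1 \le i \le n, \, 0 \le j \le a-1 \right\}.
\]
Because the defining inequalities of $S$ are imposed over every cyclic window of length $l_i$ for each $i$, the shift $\TT$ merely permutes them, and so $S$ is $\TT$-stable. Since $\gcd(a, b_0) = 1$ by hypothesis, Lemma~\ref{mainlem} applies with this $S$ to give $|Y_1| = |Y_2|/a$, which is exactly the stated formula.

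The main technical obstacle will be the middle step. Setting up the correct cyclic relabeling of the $z$-coordinates, tracking indices modulo $a$, and accounting for the integer wrap arising from $b_0 l_i + b_i \equiv 0 \pmod a$ must all be done carefully so that the threshold in the final inequality appears cleanly as $(b_0 l_i + b_i)/a$ with no off-by-one or sign errors. Once the telescoping identity is pinned down correctly, the remainder of the argument is formal.
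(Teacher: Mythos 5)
Your proposal is correct and follows essentially the same route as the paper: the paper likewise works through Johnson's $x$-coordinates, sets $z_m = x_{mb_0+k}-x_{(m+1)b_0+k}+b_0/a$, telescopes sums of $l_i$ consecutive $z$'s and uses $a \mid b_0l_i+b_i$ together with Lemma \ref{xlemma} to get the window inequalities, and then invokes Lemma \ref{mainlem} with the $\TT$-stable set cut out by those inequalities for the counting statement. The only cosmetic difference is that you quote Johnson's Lemma 3.5 for the $n=0$ bijection as a black box, while the paper redoes that computation inline.
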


\begin{proof}
We follow the proof of \cite[Lemma 3.5]{Johnson}. Lemma \ref{xlemma} implies that the set of $(a, b_0,b_1, \cdots, b_n)$-core partitions is in one-to-one correspondence with the set
$$
\{(x_0, x_1, \cdots, x_{a-1}) \in X_a: x_{i+b_j} - x_i \le b_j/a \text{ for all $i,j$ }\}.
$$
Let $k \equiv -(b_0 + 1)/2 \text{ (mod $a$)}$. Then $x_k = c_k + \frac{k}{a} - \frac{a-1}{2a}$, so $ax_k \equiv \frac{-a-b_0}{2}$ (mod $a$).  Let
\begin{equation}
\label{mainthmeq1}
z_m := x_{mb_0+k} - x_{(m+1)b_0 + k} + b_0/a. 
\end{equation}
Then $\sum_{m=0}^{a-1} z_m = b_0$ and $z_m \ge 0$  by Lemma \ref{xlemma}. Also, 
\begin{align*}
z_j + z_{j+1} + \cdots z_{j+l_i-1} & = x_{jb_0+k} - x_{jb_0+k + l_ib_0} +l_ib_0/a \\
&=  x_{jb_0+k + l_ib_0 + b_i}- x_{jb_0+k + l_ib_0} + l_ib_0/a \\
&\le (l_ib_0 + b_i)/a,  
\end{align*}
where the last inequality follows from Lemma \ref{xlemma} for $b_i$.  
Moreover, 
\begin{equation}
\label{mainthmeq2}
\sum_{m=0}^{a-1} mz_m = -ax_k +\frac{b_0(a-1)}{2} \equiv 0 \pmod{a}, 
\end{equation}
so $\sum_{m=0}^{a-1} mz_m$ is a multiple of $a$. Similarly, by \eqref{mainthmeq1} and \eqref{mainthmeq2} it is easy to see a tuple ($z_0, z_1, \cdots z_{a-1}$) satisfies the conditions
$$
\sum z_m =b_0, a|\sum mz_m, \text{ and } z_j + z_{j+1} + \cdots z_{j+l_i-1} \le (b_0l_i + b_i)/a \text{ for all $i,j$},
$$ 
determines $x_k$ uniquely, so does $x_j$ for any $j$.
Finally, Lemma \ref{mainlem} justifies the last assertion. 
\end{proof}

\begin{corollary}
\label{firstcor}
Suppose $(a,b_0) =1$ and $a \mid (b_0 + b_1)$. Let $m= (b_0 +b_1)/a$. Then the number of $(a,b_0,b_1)$-core partitions is
$$
\frac{1}{a} \sum_{my_m + \cdots 2y_2 + y_1 = b_0} \binom{a}{y_m} \binom{a-y_m}{y_{m-1}}\cdots \binom{a-y_m-\cdots - y_2}{y_1}.
$$
\end{corollary}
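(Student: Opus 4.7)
The plan is to specialize Theorem \ref{mainthm} to the case $n = 1$. Under the hypothesis $a \mid b_0 + b_1$, the unique integer $l_1 \in \{1, \ldots, a-1\}$ with $a \mid b_0 l_1 + b_1$ is $l_1 = 1$ (the accompanying hypotheses $\gcd(a, b_0) = 1$ and $a \mid b_0 + b_1$ also force $a \nmid b_0$ and $a \nmid b_1$, as demanded by Theorem \ref{mainthm}). Substituting $l_1 = 1$ into the inequality $\sum_{k=j}^{j+l_1-1} z_k \le (b_0 l_1 + b_1)/a$ collapses each constraint to the box condition $z_j \le m$, where $m = (b_0+b_1)/a$. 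Theorem \ref{mainthm} therefore reduces the count to
\[
\frac{1}{a}\left|\left\{(z_0, \ldots, z_{a-1}) \in \NN^a : \sum_{j=0}^{a-1} z_j = b_0 \text{ and } z_j \le m \text{ for all } j\right\}\right|.
\]

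The remaining task is to express this cardinality as the sum in the corollary. For this I would enumerate the tuples by their \emph{type}: for each $0 \le v \le m$, let $y_v$ denote the number of indices $j$ with $z_j = v$. Then the constraints on $(z_0, \ldots, z_{a-1})$ become the two linear relations
\[
y_0 + y_1 + \cdots + y_m = a, \qquad y_1 + 2 y_2 + \cdots + m y_m = b_0,
\]
and a given admissible type $(y_0, y_1, \ldots, y_m)$ of nonnegative integers is realized by exactly $\binom{a}{y_0, y_1, \ldots, y_m}$ tuples. Since $y_0 = a - y_1 - \cdots - y_m$ is determined by the others, factoring the multinomial as $\binom{a}{y_m}\binom{a - y_m}{y_{m-1}} \cdots \binom{a - y_m - \cdots - y_2}{y_1}$ (the trailing factor $\binom{y_0}{y_0} = 1$ is absorbed) and summing over all $(y_1, \ldots, y_m) \in \NN^m$ with $y_1 + 2 y_2 + \cdots + m y_m = b_0$ produces the stated expression. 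Tuples violating $y_0 \ge 0$ are automatically killed by the convention that $\binom{\cdot}{\cdot}$ vanishes when an entry is negative.

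The main obstacle is purely bookkeeping: one must verify that the modular arithmetic in Theorem \ref{mainthm} genuinely forces $l_1 = 1$ here, and that the conversion of the multinomial into the telescoping product of binomials matches the exact grouping written in the corollary statement. Once these details are lined up, the corollary follows as a direct specialization of Theorem \ref{mainthm} combined with the standard type-counting identity for compositions with bounded parts.
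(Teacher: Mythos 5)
Your proposal is correct and matches the paper's own argument: the paper likewise specializes Theorem \ref{mainthm} (via $l_1=1$, so the constraint becomes $z_j \le m$ for every $j$) and then counts the admissible tuples by letting $y_i$ record how many coordinates equal $i$, which yields exactly the telescoping product of binomials. The only difference is that you spell out the bookkeeping (uniqueness of $l_1$, the hypothesis $a \nmid b_i$, and the vanishing of terms with $y_1+\cdots+y_m > a$) that the paper leaves implicit.
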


\begin{proof}
In the summand, the letter $y_i$ stands for the number of $i$'s in the tuple $(z_0, \cdots, z_{a-1})$. Then the corollary follows from the previous theorem. 
\end{proof}

Letting $a=s+1, b_0=s, b_1=s+2$, Corollary \ref{firstcor} recovers a theorem of Yang-Zhong-Zhou \cite{YZZ}. Letting $a= s+d, b_0 = s, b_1 = s+2d$, Corollary \ref{firstcor} recovers Theorem 1.6 of Wang \cite{Wang}. 

\begin{corollary}[Yang-Zhong-Zhou, Wang]
\label{YZZW}
Suppose $s$ and $d$ are coprime. Then the number of $(s, s+d, s+2d)$-core partitions is
$$
\frac{1}{s+d} \sum_{y_2=0}^{\lfloor s/2 \rfloor} {{s+d}\choose{y_2}}{{s+d-y_2}\choose{s-2y_2}}.
$$
\end{corollary}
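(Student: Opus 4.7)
The plan is to apply Corollary \ref{firstcor} directly with the instantiation $a = s+d$, $b_0 = s$, and $b_1 = s+2d$. First I would verify the hypotheses required to invoke that corollary. Coprimality $(a, b_0) = 1$ reduces to $\gcd(s+d, s) = \gcd(d, s) = 1$, which is exactly the assumption on $s$ and $d$. The divisibility $a \mid b_0 + b_1$ holds since
$$b_0 + b_1 = s + (s+2d) = 2(s+d) = 2a,$$
so the integer $m$ appearing in Corollary \ref{firstcor} is exactly $2$. One should also briefly confirm the unstated prerequisite from Theorem \ref{mainthm} that neither $b_0$ nor $b_1$ is divisible by $a$: clearly $0 < s < s+d = a$, while $s + 2d = a + d$ with $0 < d < a$ (assuming $s \ge 1$), so neither residue is zero.

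Once the hypotheses are in place, the rest is bookkeeping. With $m = 2$, the indexing equation $2y_2 + y_1 = b_0$ forces $y_1 = s - 2y_2$, which restricts $y_2$ to the range $\{0, 1, \ldots, \lfloor s/2 \rfloor\}$. The generic product of binomial coefficients in Corollary \ref{firstcor} collapses to the two-factor form
$$\binom{a}{y_2}\binom{a - y_2}{y_1} = \binom{s+d}{y_2}\binom{s+d-y_2}{s-2y_2}.$$
Dividing by $a = s+d$ as Corollary \ref{firstcor} prescribes produces the stated expression.

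The main obstacle here is essentially nonexistent: the heavy combinatorial lifting lives in Theorem \ref{mainthm} and Corollary \ref{firstcor}, and the present result is pure specialization. The only place where one must be a little careful is in unpacking the summand of Corollary \ref{firstcor}; it is worth sanity-checking that the implicit value $l_1$ from Theorem \ref{mainthm} is indeed $1$, since $b_0 l_1 \equiv -b_1 \equiv b_0 \pmod{a}$ combined with $\gcd(a, b_0) = 1$ forces $l_1 \equiv 1 \pmod{a}$. This matches the convention used in the derivation of Corollary \ref{firstcor}, so no further adjustment is needed.
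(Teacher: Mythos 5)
Your proposal is correct and is exactly the paper's route: the paper obtains this corollary by specializing Corollary \ref{firstcor} with $a = s+d$, $b_0 = s$, $b_1 = s+2d$, where $b_0 + b_1 = 2(s+d)$ gives $m = 2$ and the product of binomials collapses to the stated two-factor sum. Your extra checks (coprimality, $a \nmid b_i$, and $l_1 = 1$) are sound and only make explicit what the paper leaves implicit.
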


\begin{thm}
Suppose $(s,d) = 1$. The number of $(s, s+d, s+2d, s+3d)$-core partitons is
$$
\frac{1}{s+d}\displaystyle\sum_{k=0}^{\lfloor s/2 \rfloor} \bigg\{{{s+d-k-1}\choose{k-1}}+{{s+d-k}\choose{k}}\bigg\}{{s+d-k}\choose{s-2k}}.
$$
\end{thm}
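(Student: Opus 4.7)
The plan is to apply Theorem~\ref{mainthm} directly with $a=s+d$, $b_0=s$, $b_1=s+2d$, and $b_2=s+3d$. The hypothesis $\gcd(a,b_0)=1$ holds because $\gcd(s+d,s)=\gcd(d,s)=1$. To find the $l_i$, note
\[
s\cdot 1+(s+2d)=2(s+d),\qquad s\cdot 2+(s+3d)=3(s+d),
\]
so one takes $l_1=1$ and $l_2=2$. The constraints of Theorem~\ref{mainthm} then read $z_j\le 2$ and $z_j+z_{j+1}\le 3$ for every $j$ (indices mod $a$). Since each $z_m\in\{0,1,2\}$, the second constraint is equivalent to saying that no two cyclically adjacent $z$-entries are both equal to $2$.

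By Theorem~\ref{mainthm}, the number of $(s,s+d,s+2d,s+3d)$-core partitions equals $N/(s+d)$, where
\[
N=\left|\left\{(z_0,\dots,z_{a-1})\in\{0,1,2\}^a:\textstyle\sum_m z_m=s,\text{ no two cyclically adjacent } 2\text{'s}\right\}\right|.
\]
To compute $N$, stratify by the number $k$ of entries equal to $2$. Then the tuple has $s-2k$ ones and $d+k$ zeros, forcing $0\le k\le\lfloor s/2\rfloor$. First choose the $k$ positions of the $2$'s: this is the number of ways to pick $k$ non-adjacent vertices on the cycle $\ZZ/a\ZZ$, which by the standard argument (splitting on whether position $0$ is used: if yes, positions $1$ and $a-1$ are blocked and $k-1$ non-adjacent slots must be chosen from a line of $a-3$; if no, $k$ must be chosen non-adjacent from a line of $a-1$) equals
\[
\binom{a-k}{k}+\binom{a-k-1}{k-1}=\binom{s+d-k}{k}+\binom{s+d-k-1}{k-1}.
\]
After the $2$'s are placed, the remaining $a-k=s+d-k$ positions carry $s-2k$ ones and $d+k$ zeros, for $\binom{s+d-k}{s-2k}$ choices. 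Multiplying and summing over $k$ gives the stated $N$, and dividing by $s+d$ yields the formula.

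The main conceptual step is isolating the compact combinatorial description ``cyclic $\{0,1,2\}$-sequences of length $s+d$ with sum $s$ and no two consecutive $2$'s''; after this the computation is a routine application of the classical cycle formula for non-adjacent selections together with a single binomial coefficient for stars-and-bars. No genuine obstacle remains once Theorem~\ref{mainthm} has been invoked with the correct $l_1,l_2$. Boundary cases ($k=0$, the empty set of $2$'s) are absorbed automatically by the convention $\binom{m}{-1}=0$ already in force in the excerpt.
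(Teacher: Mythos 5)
Your proposal is correct and follows essentially the same route as the paper: specialize Theorem \ref{mainthm} with $a=s+d$, $b_0=s$, $b_1=s+2d$, $b_2=s+3d$ (so $l_1=1$, $l_2=2$), reduce to counting cyclic $\{0,1,2\}$-tuples of sum $s$ with no two adjacent $2$'s, and stratify by the number $k$ of $2$'s. The only cosmetic difference is that you invoke the standard cycle formula $\binom{a-k}{k}+\binom{a-k-1}{k-1}$ for non-adjacent selections, whereas the paper rederives the same two binomials via gap variables $w_1,\dots,w_{k+1}$; the counts and the final formula agree.
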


\begin{proof}
Putting $a= s+d, b_0=s, b_1= s+2d$, and $b_2= s+3d$ in Theorem \ref{mainthm}, it is enough to compute $\frac{1}{s+d}|A|$, where
$$ 
A := \left\{(z_0, \cdots, z_{s+d-1}) \in \NN^{s+d} : \sum_{i=0}^{s+d-1} z_i =s, 0 \le z_j \le 2, \text{ and } z_j + z_{j+1} \le 3 \text{ for all $j$}\right\}.
$$
Let a tuple $(z_0, z_1, \cdots, z_{s+d-1})$ be an element of $A$ and $k$ be the number of 2's in the tuple. 
Since  $\sum_{i=0}^{s+d-1} z_i =s$, we have $k \le \lfloor s/2 \rfloor$. Based on the given restrictions, the tuple cannot have two consecutive 2.  Among indices 0 through $s+d-1$, we select all indices $0 \le i_1 \le \cdots \le i_k \le s+d-1$ where $z_{i_j}=2$ for $1 \leq j \leq k$.

Then the ``no adjacent $2$'' condition on $z_j$ is equivalent to the following:
\begin{align*}
&w_1 := i_1 \ge 0, \\
&w_2 := i_2-i_1-1 \ge 1, \\ 
&\vdots   \\
&w_k:= i_k-i_{k-1}-1 \ge 1, \\
&w_{k+1} := s+d-1-i_k \ge 0, \\
&w_1 + w_{k+1} \ge 1.
\end{align*} 

Obviously $w_1+ w_2+\cdots+ w_{k+1} = s+d-k$. If $w_1 = 0$, then $w_{k+1} \ge 1$, so there are ${{s+d-k-1}\choose{k-1}}$ pairs of such $(w_j)$ by Lemma \ref{numintsol}. If $w_1 \ge 1$, then $w_{k+1} \ge 0$, so there are ${{s+d-k}\choose{k}}$ pairs of such $(w_j)$ by Lemma \ref{numintsol}. Once there are $k$ many $2$ in $z_j$, there should be $s-2k$ many $1$ in $z_j$ by the condition  $\sum_{i=0}^{s+d-1} z_i =s$. Therefore the theorem follows. 
\end{proof}

\begin{remark}
There is another result on the number $(a_1, a_2, \cdots, a_n)$-core partitions, where $(a_i)$ forms an arithmetic progression. Fix a positive integer $p$. Let $f_s$ be the number of simultaneous $(s,s+1,…, s+p)$-cores. Xiong \cite{Xionggenerating} gives a recurrence relation satisfied by $f_s$ and gives an expression for the generating function of this sequence.
\end{remark}

Recall $\mathrm{Cat}_{a,b} = \frac{1}{a+b} {{a+b}\choose{a}}$ is the number of $(a,b)$-core partition if $a$ and $b$ are coprime. 

\begin{thm}
\label{abc}
Suppose that $3<a<b$ are coprime, $a | (2b+ c)$, and $c > \frac{1}{2}ab-2b$. Write $m = (c+2b)/a$.  Then, the number of $(a,b,c)$-core partitions is 
$$
\mathrm{Cat}_{a,b} - (m + 1){{b+a-m-3} \choose {a-2}} + {{b+a-m-3}\choose{a-1}}.
$$
\end{thm}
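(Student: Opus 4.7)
The plan is to reduce the count to lattice-point enumeration via Theorem~\ref{mainthm}, and then carry out an inclusion-exclusion that collapses sharply under the hypothesis $c > \tfrac{1}{2}ab - 2b$. The hypotheses $\gcd(a,b)=1$ and $a \mid 2b+c$ force the unique $l_1 \in \{1,\dots,a-1\}$ with $a \mid bl_1+c$ to be $l_1 = 2$. Hence by Theorem~\ref{mainthm}, the number of $(a,b,c)$-core partitions equals $\tfrac{1}{a}|S|$, where
$$
S = \{(z_0,\dots,z_{a-1}) \in \NN^a : \textstyle\sum_i z_i = b,\ z_j + z_{j+1} \le m \text{ for all } j \pmod{a}\}.
$$
For each $j \in \ZZ/a\ZZ$, let $A_j = \{(z_i) \in \NN^a : \sum_i z_i = b,\ z_j + z_{j+1} \ge m+1\}$ denote the set of tuples violating the $j$th constraint.

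The crucial observation is that $c > \tfrac{1}{2}ab - 2b$ is equivalent to $m > b/2$, so $2(m+1) > b$. Consequently, whenever the pairs $\{j,j+1\}$ and $\{k,k+1\}$ are disjoint (i.e., $k \not\equiv j-1, j, j+1 \pmod{a}$), a tuple in $A_j \cap A_k$ would satisfy $z_j + z_{j+1} + z_k + z_{k+1} \ge 2(m+1) > b$, contradicting $\sum_i z_i = b$. All triple intersections $A_i \cap A_j \cap A_k$ with distinct indices vanish for the same reason, since any three distinct cyclic indices necessarily contain at least one non-adjacent pair. By the cyclic symmetry of the constraints, inclusion-exclusion therefore collapses to $|\bigcup_j A_j| = a|A_0| - a|A_0 \cap A_1|$, giving
$$
\#(a,b,c)\text{-cores} = \mathrm{Cat}_{a,b} - |A_0| + |A_0 \cap A_1|.
$$

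It remains to compute $|A_0|$ and $|A_0 \cap A_1|$. The generating function for pairs $(z_0, z_1) \in \NN^2$ with $z_0 + z_1 \ge m+1$ is $\tfrac{(m+2)x^{m+1} - (m+1)x^{m+2}}{(1-x)^2}$; multiplying by $(1-x)^{-(a-2)}$ for the free coordinates and extracting $[x^b]$ gives, after one use of Pascal's identity, $|A_0| = \binom{b+a-m-3}{a-1} + (m+2)\binom{b+a-m-3}{a-2}$. For $|A_0 \cap A_1|$, a short case split on whether $z_1 \ge m+1$ or $z_1 \le m$ yields the generating function $\tfrac{x^{m+1} + x^{m+2} - x^{2m+3}}{(1-x)^3}$ for admissible triples $(z_0, z_1, z_2)$. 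Crucially, the $x^{2m+3}$ term contributes zero to $[x^b](1-x)^{-a}$ because $m > b/2$ forces $b + a - 2m - 4 < a - 1$; Pascal's identity then yields $|A_0 \cap A_1| = 2\binom{b+a-m-3}{a-1} + \binom{b+a-m-3}{a-2}$. Substituting and cancelling produces the claimed formula.

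The main obstacle is the collapse of inclusion-exclusion to just two non-vanishing terms, which is precisely where the hypothesis $c > \tfrac{1}{2}ab - 2b$ does its essential work; the remaining generating-function calculations are routine once this reduction is in place.
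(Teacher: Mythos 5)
Your proposal is correct and takes essentially the same route as the paper: reduce via Theorem \ref{mainthm} with $l_1=2$, define the violation sets $A_j$, and use $m>b/2$ to kill all non-adjacent pairwise and all triple intersections, so inclusion-exclusion collapses to $\mathrm{Cat}_{a,b}-|A_0|+|A_0\cap A_1|$. The only cosmetic difference is that you evaluate $|A_0|$ and $|A_0\cap A_1|$ by generating functions rather than the paper's direct case count via Lemma \ref{numintsol}; after Pascal's identity your intermediate values agree with the paper's and yield the stated formula.
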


\begin{remark}
Note that all binomial terms in Theorem \ref{abc} is equal to 0 when $c > ab-a-b$, which gives a constant value $\mathrm{Cat}_{a,b}$ for the number of $(a,b,c)$-core partitions. Indeed, for any $c>ab-a-b$, $c$ can be represented as a linear combination of $a$ and $b$, like $c = sa+tb$ for some nonnegative integer $s$ and $t$. Then, an $(a,b,sa+tb)$-core is simply an $(a,b)$-core.
\end{remark}

\begin{proof}[Proof of Theorem \ref{abc}]

The proof is based on the inclusion-exclusion priciple. 
Let 
$$
A_i : = \left\{(z_0, z_1, \cdots z_{a-1}) \in \NN^a : \sum z_j = b, z_i + z_{i+1} > m\right\}.
$$ 
Note first that $ m > b/2$, so $A_i \cap A_{i+2} = \emptyset$ (since $a>3$). Clearly $|A_i|$ and $|A_i \cap A_{i+1}|$ are independent of the choice of $i$. Theorem \ref{mainthm} shows that the number of $(a,b,c)$-core partitions is
\begin{equation}
\label{inexequation}
\frac{1}{a}\left| \left\{(z_0, z_1, \cdots z_{a-1}) \in \NN^a : \sum z_j = b, z_i + z_{i+1} \le m \text{ for all $i$} \right\}\right| .
\end{equation}
By the inclusion-exclusion principle and Lemma \ref{numintsol}, \eqref{inexequation} equals
$$ \frac{1}{a}{{b+a-1} \choose {a-1}} - |A_0| + |A_0 \cap A_1|.$$ 
We now compute $|A_0|$. If $z_0 \ge m+1$, then $z_1 \ge 0$, so there are ${{b+a-m-2}\choose{a-1}}$ many such tuples $(z_0, z_1, \cdots, z_{a-1})$ by Lemma \ref{numintsol}. If $z_0 = l \le m$, then $z_1 \ge m+1-l$, so there are ${{b+a-m-3}\choose{a-2}}$ many such tuples by Lemma \ref{numintsol}. Therefore
$$
|A_0| = {{b+a-m-2}\choose{a-1}} + (m+1){{b+a-m-3}\choose{a-2}}.
$$
Now we compute $|A_0 \cap A_1|$. If $z_1 \ge m+1$, then $z_0, z_2 \ge 0$, so there are  
${{b+a-m-2}\choose{a-1}}$ such tuples by Lemma \ref{numintsol}.  If $z_1 = l \le m$, then $z_0, z_2 \ge m+1-l$, so there are ${{b+a+l-2m-4}\choose{a-2}}$ such tuples by Lemma \ref{numintsol}. It follows that
\begin{align*}
|A_0 \cap A_1| &= {{b+a-m-2}\choose{a-1}} + \sum_{k=a-2}^{b+a-m-4} {{k}\choose{a-2}} \\
&= {{b+a-m-2}\choose{a-1}} + {{b+a-m-3}\choose{a-1}},
\end{align*}
which completes the proof.
\end{proof}

\section*{Acknowledgement}
We are very grateful to Nathan Kaplan for many valuable comments and discussions. We also thank to Aaron Berger and Dennis Eichhorn for comments.

\bibliographystyle{abbrv}
\bibliography{refe}

\end{document}